\makeatletter\@namedef{subjclassname@2010}{%
  \textup{2010} Mathematics Subject Classification}
\newtheorem{thm}[equation]{Theorem}
\newtheorem{cor}[equation]{Corollary}
\newtheorem{prop}[equation]{Proposition}
\newtheorem{lem}[equation]{Lemma}
\numberwithin{equation}{section}
\theoremstyle{definition}
\newtheorem{rem}[equation]{Remark}
\newtheorem{exa}[equation]{Example}
\newcommand{\C}{\ensuremath{{\mathcal{C}}}}
\renewcommand{\S}{\ensuremath{{\mathcal{S}}}}
\renewcommand{\O}{\ensuremath{{\mathcal{O}}}}
\renewcommand{\P}{\ensuremath{{\mathbb{P}}}}
\newcommand{\Q}{\ensuremath{{\mathbb{Q}}}}
\newcommand{\F}{\ensuremath{{\mathbb{F}}}}
\newcommand{\p}{\ensuremath{{\mathfrak{p}}}}
\newcommand{\q}{\ensuremath{{\mathfrak{q}}}}
\DeclareMathOperator{\Div}{Div}
\DeclareMathOperator{\End}{End}
\DeclareMathOperator{\Hom}{Hom}
\DeclareMathOperator{\res}{res}
\DeclareMathOperator{\ord}{ord}
\begin{document}

\title[Automatic Sequences and Curves over Finite Fields]{Automatic Sequences and Curves over Finite Fields}

\author[A. Bridy]{Andrew Bridy}
\address{Andrew Bridy\\Department of Mathematics\\ University of Rochester\\
Rochester, NY 14627, USA}
\email{abridy@ur.rochester.edu}

\date{}

\begin{abstract}
We prove that if $y=\sum_{n=0}^\infty{\bf a}(n)x^n\in\F_q[[x]]$ is an algebraic power series of degree $d$, height $h$, and genus $g$, then the sequence ${\bf a}$ is generated by an automaton with at most $q^{h+d+g-1}$ states, up to a vanishingly small error term. This is a significant improvement on previously known bounds. Our approach follows an idea of David Speyer to connect automata theory with algebraic geometry by representing the transitions in an automaton as twisted Cartier operators on the differentials of a curve.
\end{abstract}

\subjclass[2010]{Primary 11B85; Secondary 11G20, 14H05, 14H25}

\keywords{Automatic Sequences, Formal Power Series, Algebraic Curves, Finite Fields}

\maketitle

\section{Introduction}

Our starting point is the following well-known theorem of finite automata theory.

\begin{thm}[Christol]\cite{CKMR,Christol}
The power series $y=\sum_{n=0}^\infty {\bf a}(n)x^n\in\F_q[[x]]$ is algebraic over $\F_q(x)$ if and only if the sequence ${\bf a}$ is $q$-automatic.
\end{thm}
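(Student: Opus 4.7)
The theorem is an equivalence; I treat the two directions separately. Let $\Lambda_0,\ldots,\Lambda_{q-1}$ be the $\F_q$-linear operators on $\F_q[[x]]$ defined by $\Lambda_r(\sum_n c_n x^n)=\sum_n c_{qn+r}x^n$. Since Frobenius is a ring homomorphism on $\F_q[[x]]$ and $c^q=c$ for $c\in\F_q$, every $f\in\F_q[[x]]$ admits the unique decomposition
$$
f \;=\; \sum_{r=0}^{q-1} x^r (\Lambda_r f)^q,
$$
and this identity drives both directions. By Eilenberg's criterion, ${\bf a}$ is $q$-automatic if and only if the orbit $\O(y):=\{\Lambda_{r_k}\cdots\Lambda_{r_1}y : k\ge 0,\ r_i\in\{0,\ldots,q-1\}\}$ is finite in $\F_q[[x]]$; this is what I will produce (resp.\ exploit).

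\textbf{Automatic $\Rightarrow$ algebraic.} Suppose $\O(y)=\{y_1,\ldots,y_N\}$ with $y_1=y$, and pick $\sigma:\{1,\ldots,N\}\times\{0,\ldots,q-1\}\to\{1,\ldots,N\}$ with $\Lambda_r y_i=y_{\sigma(i,r)}$. The decomposition above yields $y_i=\sum_{r=0}^{q-1}x^r y_{\sigma(i,r)}^q$ for all $i$. Let $F=\F_q(x)(y_1,\ldots,y_N)\subseteq\F_q((x))$, a finitely generated extension of $\F_q$. These relations place every generator $y_i$ in $F^q(x)$; combined with $x\in F$, we get $F=F^q(x)$, so $[F:F^q]\le q$. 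For a finitely generated extension of the perfect field $\F_q$, this index equals $q^{\mathrm{trdeg}(F/\F_q)}$, so $\mathrm{trdeg}(F/\F_q)\le 1$. Since $F\supseteq \F_q(x)$ already has transcendence degree $1$, the extension $F/\F_q(x)$ is algebraic (in fact finite, since $F$ is finitely generated); in particular $y\in F$ is algebraic over $\F_q(x)$.

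\textbf{Algebraic $\Rightarrow$ automatic.} Now assume $y$ is algebraic over $\F_q(x)$ and set $K=\F_q(x)(y)$, with smooth projective model $C$. Handling the purely inseparable case by hand, I may assume $K/\F_q(x)$ is separable, so $[K:K^q]=q$ and $\{1,x,\ldots,x^{q-1}\}$ is a $K^q$-basis of $K$. Then $y=\sum_{r=0}^{q-1} x^r v_r^q$ for unique $v_r\in K$, and uniqueness of the Frobenius decomposition in $\F_q((x))$ forces $v_r=\Lambda_r y$; hence $\O(y)\subseteq K\cap\F_q[[x]]$. To finish I bound pole divisors: starting from the pole divisor $D$ of $y$ on $C$, the identity above forces each $\Lambda_r y$ to lie in a Riemann--Roch space $L(D')$, where $D'$ is obtained from $D$ by tracking poles and the $x^r$ contribution and then ``dividing by $q$'' (a Cartier contraction). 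Iterating, a single effective divisor $D''$ absorbs the entire orbit, so $\O(y)\subseteq L(D'')$, which is a finite-dimensional $\F_q$-vector space and hence a finite set.

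The main obstacle is the uniform pole-divisor bound in the second direction: making precise the contracting operation and verifying that a single fixed Riemann--Roch space absorbs the full orbit. This is exactly the geometric step the paper carries out in detail, following Speyer, by recasting each $\Lambda_r$ as a twisted Cartier operator on an appropriate sheaf on $C$; the resulting finiteness is both transparent and sharp, as reflected in the state count of the main theorem.
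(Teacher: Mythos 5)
Your proposal is correct in outline, but the two directions relate differently to what the paper actually does.

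For the direction \emph{automatic $\Rightarrow$ algebraic}, you take a genuinely different route from the paper. The paper's Proposition~2.13 works from the relations $y_i \in \langle y_1^q,\dots,y_m^q\rangle$ and iterates them to produce an explicit $\F_q(x)$-linear dependence among $y,y^q,\dots,y^{q^m}$ (essentially Ore's lemma), which yields quantitative bounds $\deg(y)\le q^m-1$ and $h(y)\le mq^{m+1}$ that feed into the paper's complexity theme. You instead note that the same relations give $F=F^q(x)$ for $F=\F_q(x)(y_1,\dots,y_N)$, so $[F:F^q]\le q$; combined with $[F:F^q]=q^{\operatorname{trdeg}(F/\F_q)}$ for finitely generated extensions of a perfect field, this forces $\operatorname{trdeg}(F/\F_q)=1$ and hence $F/\F_q(x)$ finite. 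This $p$-degree argument is slicker and correct, but it gives no bound on degree or height; in a paper whose whole point is quantifying state complexity, the more explicit argument is preferable. (One minor inaccuracy of emphasis: you write ``these relations place every generator $y_i$ in $F^q(x)$,'' which together with $x\in F^q(x)$ gives $F=F^q(x)$; that's right, and then $F=F^q[x]$ since $x^q\in F^q$, so $[F:F^q]\le q$ as you say.)

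For the direction \emph{algebraic $\Rightarrow$ automatic}, you are sketching the paper's approach (Speyer's, Proposition~3.5), not proposing an alternative, and you say so explicitly. The one substantive difference is that you phrase the contraction in terms of functions and spaces $\mathcal{L}(D)$ rather than differentials and $\Omega(D)$. This can be made to work, but the differential formulation is what makes the estimates clean: the identity $\sigma_i(y\,dx)=\Lambda_i(y)\,dx$ turns $\Lambda_i$ into the twisted Cartier operator $\C(x^{p-1-i}\,\cdot\,)$, and $\C$ automatically preserves holomorphy and improves pole orders (Proposition~3.3(4)--(5)). In the function-space formulation the $x^r$ factors interact awkwardly with poles of $x$; showing that the orbit stays in one fixed $\mathcal{L}(D'')$ is exactly the step you flag as ``the main obstacle'' and leave to the paper. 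As a sketch this is fine, but it is not an independent proof of the second direction. Two smaller points: the paper's $\Lambda_i$ are $\F_p$-linear $p$-ary operators (with a $p$th root) and $q$-ary decimations are realized as $r$-fold compositions, which is chosen precisely because it is invariant under base extension; your direct $q$-ary $\Lambda_r$ is the alternative convention the paper explicitly declines. And the caveat about ``handling the purely inseparable case by hand'' is vacuous: since $K\subseteq\F_q((x))$ and $\F_q((x))/\F_q(x)$ is separable, the extension $K/\F_q(x)$ is automatically separable, as the paper notes after Equation~(3.7).
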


Christol's theorem establishes a dictionary between automatic sequences and number theory in positive characteristic. The purpose of this paper is to investigate how complexity translates across this dictionary. The secondary purpose is to demonstrate an intimate connection between automatic sequences and the algebraic geometry of curves.

We take the complexity of a sequence {\bf a} to be \emph{state complexity}. Let $N_q({\bf a})$ denote the number of states in a minimal $q$-automaton that generates ${\bf a}$ in the reverse-reading convention (this will be defined precisely in Section 2). If $y$ is algebraic over $k(x)$, let the \emph{degree} $\deg(y)$ be the usual field degree $[k(x)[y]:k(x)]$ and the \emph{height} $h(y)$ be the minimal $x$-degree of a bivariate polynomial $f(x,T)\in k[x,T]$ such that $f(x,y)=0$. The \emph{genus} of $y$ will be the genus of the normalization of the projective closure of the affine plane curve defined by the minimal polynomial of $y$.

We bound the complexity of ${\bf a}$ in terms of the degree, height, and genus of $y$. In Section \ref{Background} we review some known lower bounds. Our main result is the following upper bound.

\begin{thm}\label{thm: main}
Let $y=\sum_{n=0}^\infty {\bf a}(n)x^n\in\F_q[[x]]$ be algebraic over $\F_q(x)$ of degree $d$, height $h$, and genus $g$. Then $$N_q({\bf a})\leq (1+o(1))q^{h+d+g-1}.$$ The $o(1)$ term tends to 0 for large values of any of $q$, $h$, $d$, or $g$.
\end{thm}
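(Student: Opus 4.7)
The strategy, following the program sketched in the abstract, is to embed the $q$-kernel of $\bf a$ into a finite-dimensional $\F_q$-vector space of global differentials on the smooth projective model $X$ of the minimal polynomial curve of $y$, and then bound its cardinality by $q$ raised to the Riemann--Roch dimension.

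First, a standard Myhill--Nerode reduction: in the reverse-reading convention, $N_q({\bf a})$ equals the cardinality of the $q$-kernel
$$K_q({\bf a})=\{\Lambda_{r_k}\circ\cdots\circ\Lambda_{r_1}(y) : k\geq 0,\ 0\leq r_i<q\},$$
where $\Lambda_r\colon\F_q[[x]]\to\F_q[[x]]$ is the section operator $\sum_n a_nx^n\mapsto\sum_n a_{qn+r}x^n$. The key algebraic identity is the characteristic-$p$ decomposition
$$y=\sum_{r=0}^{q-1}x^r\,\Lambda_r(y)^q,$$
valid because every element of $\F_q$ is its own $q$-th power. Inverting this identity exhibits each $\Lambda_r$ as a component of a Frobenius-twisted operation on $\F_q(X)$. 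Following Speyer, we transfer the action to differentials: multiplying through by $dx$, the $\Lambda_r$ become components of twisted Cartier-type operators $\mathcal{C}_0,\ldots,\mathcal{C}_{q-1}$ on $\Omega^1_{\F_q(X)/\F_q}$, and $K_q({\bf a})$ embeds into the $\F_q$-subspace generated by iterated application of the $\mathcal{C}_r$ to $\omega:=y\,dx$.

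Next, we choose a divisor $D$ on $X$ such that $\omega\in H^0(X,\Omega_X(D))$ and $H^0(X,\Omega_X(D))$ is stabilized by the $\mathcal{C}_r$ after finitely many iterations. The divisor $D$ is supported above $x=\infty$ together with any poles of $y$ coming from the leading coefficient of the minimal polynomial. Since $y\colon X\to\P^1$ has degree $h$ and $x\colon X\to\P^1$ has degree $d$, we get $\deg(y)_\infty=h$ and $\deg(x)_\infty=d$. A direct computation of $\ord_P(\omega)$ at each point $P$ over $\infty$, combined with the Cartier contraction $D\mapsto\lceil D/q\rceil+(\text{twist})$ iterated to stability, should yield a stable divisor of degree $h+d+o(1)$. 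Riemann--Roch then gives
$$\dim_{\F_q}H^0(X,\Omega_X(D))=\deg D+g-1=h+d+g-1+o(1),$$
and therefore $N_q({\bf a})=|K_q({\bf a})|\leq q^{\dim}=(1+o(1))\,q^{h+d+g-1}$.

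The main obstacle is the Cartier stabilization step: a priori, the pole divisor of $\omega=y\,dx$ at infinity carries an additional contribution of one per point of $X$ above $\infty$ (from the differential factor $dx$), costing up to $d$ in the degree and spoiling the advertised exponent. Showing that these extra contributions are absorbed as any of $q,h,d,g$ grows --- so that the stable divisor's degree is cleanly $h+d+o(1)$ --- demands careful bookkeeping of the Cartier contraction, to be paired with the Riemann--Roch slack coming from $\dim H^1$ when $\deg D$ is not already large compared to $2g-2$. Together these two errors are what produce the $o(1)$ appearing in the statement.
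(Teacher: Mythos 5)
Your setup — reducing $N_q(\mathbf{a})$ to the size of the $q$-kernel, transferring the $\Lambda_r$ action to twisted Cartier operators on differentials, and aiming to trap the orbit of $\omega = y\,dx$ in a Riemann--Roch space of dimension $h+d+g-1$ — matches the paper's framework. But the obstacle you identify, and the mechanism you propose for the $o(1)$ error, are not the ones that actually make the argument work, and as stated your sketch has a genuine gap.

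You write that the ``extra $d$'' coming from the poles of $dx$ over $\infty$ must somehow be ``absorbed as $q,h,d,g$ grow,'' via a Cartier contraction to stability plus $H^1$ slack. That is the wrong place to look. The crucial point in the paper is an \emph{asymmetry between $\sigma_0$ and the other $\sigma_i$}. A direct valuation calculation (Lemma \ref{lem: fast behavior}) shows that for every $i \geq 1$, the operator $\sigma_i$ sends $y\,dx$ (in fact all of $\Omega((y)_\infty + (x)_\infty + \sqrt{(x)_\infty})$) into the \emph{smaller} space $W = \Omega((y)_\infty + (x)_\infty)$, and that $W$ is already invariant under \emph{all} the $\sigma_i$ including $\sigma_0$. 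Since $\deg((y)_\infty + (x)_\infty) = h + d$ exactly and this divisor is effective, $\dim_{\F_q} W = h + d + g - 1$ by Riemann--Roch with no slack. So the extra poles from $dx$ never enter the count for most of the orbit; one application of any $\sigma_i$ with $i>0$ pushes you into $W$, and you stay there. This is a finite phenomenon, not an asymptotic absorption.

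What is \emph{not} in $W$ is only the tail $\{\,\sigma_0^n(y\,dx) : n\geq 0\,\}$, because $\sigma_0$ is the one twist that does not immediately improve the pole at places where $x$ has a pole. Bounding this tail is where the real work lies, and your sketch does not address it. The paper handles it by rewriting $\sigma_0^n(\omega) = x\,\C^n(\omega/x)$, showing that after $\lceil \log_p T\rceil$ steps the differential $\C^n(\omega/x)$ has only simple poles (Lemma \ref{prop: eventual behavior}), and then proving that on a differential with simple poles at points of degrees $e_1,\dots,e_n$, the operator $\C^{r\cdot\mathrm{lcm}(e_i)}$ acts as the identity modulo holomorphic differentials (Lemma \ref{simple poles}, via a constant field extension that splits the poles and a residue computation). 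The size of this $\sigma_0$-tail is then bounded by roughly $r\,L(h+2d)\,q^g$ where $L$ is Landau's function, and the subexponential growth $L(n)\leq \exp((1+o(1))\sqrt{n\log n})$ is what makes this tail, divided by $q^{h+d+g-1}$, tend to zero. That, together with the lone ``$+1$'' for $y\,dx$ itself, is the source of the $o(1)$ --- not Riemann--Roch slack in the range $\deg D \leq 2g-2$, which plays no role. Without the Lemma \ref{lem: fast behavior}/\ref{prop: eventual behavior}/\ref{simple poles} machinery your argument would, as you yourself note, only reach an exponent like $h+2d+g-1$ or $h+3d+g-1$ (as in the paper's Corollary \ref{cor: easy upper bound}), with no route to shaving the extra $d$.
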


All previous upper bounds are much larger. These are usually stated in terms of the $q$-kernel of ${\bf a}$, which can be described as the orbit of a certain semigroup acting on the series $y$ and is in bijection with a minimal automaton that outputs ${\bf a}$ (see Theorem \ref{Eilenberg}). The best previous bound is due to Fresnel, Koskas, and de Mathan, who show that
\begin{equation}\label{best previous}
N_q({\bf a})\leq q^{qd(h(2d^2-2d+1)+C)}
\end{equation}
for some $C=C(q)$ that they do not seem to compute exactly \cite[Thm 2.2]{FresnelKoskas}. Adamczewski and Bell prove a bound that is roughly $q^{d^4 h^2 p^{5d}}$ where $p=\text{char }\F_q$ \cite[p. 383]{ABVanishing}. Earlier, Derksen showed a special case of the bound of Adamczewski and Bell for rational functions \cite[Prop 6.5]{Derksen}. Harase proved a larger bound using essentially the same technique \cite{Harase1,Harase2}. It should be noted that some of these results hold in more generality than the setting of this paper: the techniques of Adamczewski, Bell, and Derksen apply to power series in several variables over infinite ground fields of positive characteristic (with appropriate modifications).

In Proposition \ref{rational sharp} we show that Theorem \ref{thm: main} is qualitatively sharp for the power series expansions of rational functions, in that it is sharp if we replace the $o(1)$ term by 0. There are some special cases where the bound can be improved. For example, an easy variation of our main argument shows that if $y\,dx$ is a holomorphic differential on the curve defined by the minimal polynomial of $y$, then $N_q({\bf a})\leq q^{d+g-1}$ (see Example \ref{elliptic example}). It is also possible to give a coarser estimate that is independent of $g$, which shows that Theorem \ref{thm: main} compares favorably to the work of Fresnel et al. even when the genus is as large as possible:
\begin{cor}
Under the hypotheses of Theorem \ref{thm: main}, $N_q({\bf a})\leq (1+o(1))q^{hd}$.
\end{cor}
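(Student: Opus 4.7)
The plan is to deduce the corollary directly from Theorem \ref{thm: main} by bounding $g$ in terms of $h$ and $d$ alone. First I would let $f(x,T)\in\F_q[x,T]$ be a minimal polynomial of $y$, so that $f$ has $x$-degree $h$ and $T$-degree $d$. Viewing the affine curve $\{f=0\}\subset\A^2$ inside $\P^1\times\P^1$ via the natural open immersion $\A^2\hookrightarrow\P^1\times\P^1$ and taking its closure yields a (possibly singular) irreducible curve $C$ of bidegree $(h,d)$ on the smooth quadric surface $\P^1\times\P^1$.

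By adjunction on $\P^1\times\P^1$ (where the canonical divisor has bidegree $(-2,-2)$), such a curve has arithmetic genus $(h-1)(d-1)$. Since $y$ has degree exactly $d$, the polynomial $f$ is irreducible (after clearing content), so the normalization of $C$ is the unique smooth projective model of the function field $\F_q(x,y)$, and therefore has geometric genus equal to $g$ in the sense defined before Theorem \ref{thm: main}. Because the geometric genus of an irreducible projective curve is bounded above by its arithmetic genus, one obtains
\[
g \leq (h-1)(d-1) = hd - h - d + 1.
\]

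Substituting into Theorem \ref{thm: main} gives
\[
N_q({\bf a}) \leq (1+o(1))\,q^{h+d+g-1} \leq (1+o(1))\,q^{hd},
\]
and the error term remains $o(1)$ because by Theorem \ref{thm: main} it tends to $0$ as any of $q,h,d,g$ grows; in particular it tends to $0$ as $h$ or $d$ grows. The only subtlety is to use the right compactification: the naive bound $g\leq\binom{h+d-1}{2}$ coming from viewing $f$ as a plane curve of total degree $h+d$ in $\P^2$ would yield only $N_q({\bf a})\leq(1+o(1))q^{(h+d)(h+d-1)/2}$, which is strictly weaker than $q^{hd}$; it is the finer bidegree bound available on $\P^1\times\P^1$ that makes the exponents line up cleanly as $hd$.
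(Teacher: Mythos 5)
Your proof is correct, but it reaches the genus bound $g\leq (h-1)(d-1)$ by a genuinely different mechanism than the paper. The paper works purely with function fields: it observes that $K=\F_q(X)$ contains the two rational subfields $\F_q(x)$ and $\F_q(y)$ of degrees $d$ and $h$ respectively, and then directly invokes Castelnuovo's inequality in the special case due to Riemann (Stichtenoth, Cor.\ 3.11.4), which states $g\leq ([K:\F_q(x)]-1)([K:\F_q(y)]-1)$. You instead take the bi-projective closure $C\subset\P^1\times\P^1$ of the affine curve, compute the arithmetic genus $(h-1)(d-1)$ of a bidegree-$(h,d)$ curve by adjunction against $K_{\P^1\times\P^1}\sim(-2,-2)$, and then use that geometric genus is bounded by arithmetic genus. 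These are two classical proofs of what is essentially the same fact: Riemann's inequality for $K=\F_q(x,y)$ is very often derived by exactly the $\P^1\times\P^1$ argument you give, so they are close cousins. The paper's phrasing is quicker and stays in the function-field language used throughout the rest of the article, while yours is more explicitly geometric and makes transparent \emph{why} the sharp exponent is $hd$ rather than something worse (as you nicely point out by contrasting with the plane-curve bound in $\P^2$). One small point worth making explicit: you need the bi-homogenization of the minimal polynomial to cut out an irreducible divisor of bidegree exactly $(h,d)$; this holds because the minimal polynomial (taken primitive in $\F_q[x,T]$) has $x$-degree exactly $h$ and $T$-degree exactly $d$, and hence its bi-homogenization has no spurious $X_0$ or $Y_0$ factors. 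Your handling of the $o(1)$ term is also fine: since $h+d+g-1\leq hd$, the multiplicative error in Theorem \ref{thm: main} bounds the error in the corollary.
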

\begin{proof}
Let $X$ be the curve defined by the minimal polynomial of $y$. Observe that $d$ is the degree of the map $\pi_x:X\to\P^1$ that projects on the $x$-coordinate. Likewise, $h$ is the degree of the projection map $\pi_y:X\to \P^1$. Therefore $g\leq (d-1)(h-1)$ by Castelnuovo's inequality (actually, a special case originally due to Riemann, see \cite[Cor 3.11.4]{Stichtenoth}).
\end{proof}

Though the reverse-reading convention is the natural one to use in the context of algebraic series, our approach also gives an upper bound on the state complexity of forward-reading automata via a dualizing argument. Let $N_q^f({\bf a})$ denote the minimal number of states in a forward-reading automaton that generates the sequence ${\bf a}$, and let $y$ be as in Theorem \ref{thm: main}. Then we have the following.
\begin{thm}\label{thm: main2}
$N_q^f({\bf a})\leq q^{h+2d+g-1}$.
\end{thm}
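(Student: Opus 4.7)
The plan is to extract from the reverse-reading construction of Theorem \ref{thm: main} a linear representation of $\mathbf{a}$ of dimension $h+2d+g-1$, and then apply the standard fact that a sequence over $\F_q$ admitting a linear representation of dimension $n$ has forward-reading state complexity at most $q^n$ (obtained by taking row-orbits in the dual space).

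To set up the linear representation, I would take $V' = H^0(X,\Omega_X(D'))$ for a divisor $D'$ on $X$ of degree $h+2d$; by Riemann-Roch, $\dim V' = h+2d+g-1$. The geometric input from the proof of Theorem \ref{thm: main} is that $y\,dx$ lies in $V'$ and that $V'$ is preserved by the twisted Cartier operators $C_0,\ldots,C_{q-1}$. Coupling this with the output functional $\epsilon\colon\omega\mapsto[x^0](\omega/dx)$ yields a linear representation of $\mathbf{a}$, namely $\mathbf{a}(n) = \epsilon\bigl(C_{r_k}\cdots C_{r_1}(y\,dx)\bigr)$ for $n = r_1 + qr_2 + \cdots + q^{k-1}r_k$. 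The dualizing step is then to observe that the forward-reading DFA is realized by the row-orbit of $\epsilon$ under the transposes $C_r^{*}$ acting on $(V')^*$, with the output at a covector $\lambda$ being $\lambda(y\,dx)$. Since $|(V')^*| = q^{\dim V'} = q^{h+2d+g-1}$, the bound follows.

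The main obstacle is the geometric step of identifying $V'$ of dimension $h+2d+g-1$ and verifying its closure under the $C_r$. The divisor degree $h+2d$ (rather than $h+d$, which suffices for the reverse-reading bound $(1+o(1))q^{h+d+g-1}$) reflects the Serre-dual interpretation of the twisted Cartier operators: their adjoints on $(V')^*$ behave like twisted Frobenius operators, increasing pole orders by a factor of $q$, and absorbing even a single such application costs roughly the degree $d$ of the projection $\pi_x\colon X\to\P^1$. Once this divisor-theoretic setup is in place, the remainder of the argument is straightforward $\F_q$-linear algebra.
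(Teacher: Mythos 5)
Your proposal matches the paper's proof: the paper takes the concrete divisor $(y)_\infty+(x)_\infty+\sqrt{(x)_\infty}$, of degree at most $h+2d$, for your $D'$, so that $V'=\Omega((y)_\infty+(x)_\infty+\sqrt{(x)_\infty})$ contains $y\,dx$ and is $\sigma_i$-invariant by the valuation inequalities behind Lemma~\ref{lem: fast behavior}, then dualizes exactly as you describe via Proposition~\ref{dimension bound}. The only quibble is your heuristic for the extra $+d$: it is not a Serre-duality cost of "absorbing a Frobenius application" but simply that $y\,dx$ itself (unlike its $\S_p$-orbit after one step) can have a pole of order $-v_P(y)-v_P(x)+1$ at each pole $P$ of $x$, the $+1$ coming from $dx$ and contributing up to $\deg\sqrt{(x)_\infty}\leq d$ to the divisor degree.
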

To our knowledge, there are no previous bounds on foward-reading complexity in this context except for the well known observation that $N_q^f({\bf a})\leq q^{N_q({\bf a})}$ (see Proposition \ref{dimension bound}).

The key idea in our argument is to recast finite automata in the setting of algebraic geometry. If $y=\sum_{n=0}^\infty {\bf a}(n) x^n$ is algebraic, then it lies in the function field of a curve $X$, and a minimal reverse-reading automaton that generates ${\bf a}$ embeds into the differentials of $X$ in a natural way. This brings to bear the machinery of algebraic curves, and in particular the Riemann-Roch theorem. This idea was introduced by David Speyer in \cite{Speyer} and used to give a new proof of the ``algebraic implies automatic'' direction of Christol's theorem. Building on Speyer's work, we improve the complexity bound implicit in his proof.

The paper is organized as follows. Section \ref{Background} reviews the theory of finite automata and automatic sequences. Section \ref{Curves} introduces the connection with algebraic geometry, leading to the proof of Theorem \ref{thm: main}. We also discuss the problem of state complexity growth as the field varies: if $K$ is a number field and $y=\sum_{n=0}^\infty{\bf a}(n)x^n\in K[[x]]$ is algebraic over $K(x)$, then the state complexity of the reduced sequences ${\bf a}_\p$ varies with the prime $\p$ in a way controlled by the algebraic nature of $y$. Section \ref{Examples} illustrates some detailed examples of our method.

\subsection*{Acknowledgements}
The author would like to thank David Speyer for first exploring this striking link between automata and geometry and for allowing an exposition of his work in \cite{Speyer}. The author also thanks Eric Bach and Jeffrey Shallit for many helpful conversations related to the topics in the paper, and the referee for a careful reading of the paper and helpful comments. Diagrams of automata were produced using the VauCanSon-G Latex package of Sylvain Lombardy and Jacques Sakarovitch.

\section{Automata, Sequences, and Representations}\label{Background}

\subsection{Finite Automata and Automatic Sequences}
A comprehensive introduction to finite automata and automatic sequences can be found in the book of Allouche and Shallit \cite{AS}. We give a brief overview of the theory for the convenience of the reader.

A \emph{finite automaton} or \emph{DFAO} (\emph{Deterministic Finite Automaton with Output}) $M$ consists of a finite set $\Sigma$ known as the \emph{input alphabet}, a finite set $\Delta$ known as the \emph{output alphabet}, a finite set of \emph{states} $Q$, a distinguished \emph{initial state} $q_0\in Q$, a \emph{transition function} $\delta:Q\times\Sigma\to Q$, and an \emph{output function} $\tau:Q\to\Delta$. 

Let $\Sigma^*$ (the \emph{Kleene closure} of $\Sigma$) be the monoid of all finite-length words over $\Sigma$ under the operation of juxtaposition, including an empty word as the identity element. The function $\delta$ can be prolonged to a function $\delta:Q\times\Sigma^*\to Q$ by inductively defining $\delta(q_i,wa)=\delta(\delta(q_i,w),a)$ for $w\in\Sigma^*$ and $a\in\Sigma$. Therefore a DFAO $M$ induces a map $f_M:\Sigma^*\to \Delta$ defined by $f_M(w)=\tau(\delta(q_0,w))$ under the \emph{forward-reading} convention. If we let $w^R$ denote the reverse of the word $w$, then the \emph{reverse-reading} convention is $f_M(w)=\tau(\delta(q_0,w^R))$. A function $f:\Sigma^*\to\Delta$ is a \emph{finite-state function} if $f=f_M$ for some DFAO $M$. A DFAO is \emph{minimal} if it has the smallest number of states among automata that induce the same function $f_M$.

A helpful way of visualizing a DFAO $M$ is through its \emph{transition diagram}. This is a directed graph with vertex set $Q$ and directed edges that join $q$ to $\delta(q,a)$ for each $q\in Q$ and $a\in\Sigma$. The initial state is marked by an incoming arrow with no source. The states are labeled by their output $\tau(q)$. Figure \ref{Thue-Morse} shows the transition diagram of the Thue-Morse DFAO $T$ with $\Sigma=\Delta=\{0,1\}$, where $f_T(w)=1$ if and only if $w\in\{0,1\}^*$ contains an odd number of $1$s.

\begin{figure}[ht]
\caption{Thue-Morse 2-DFAO $T$}
\includegraphics[scale=.75]{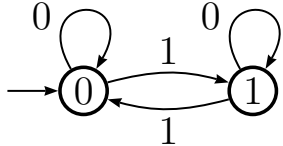}
\label{Thue-Morse}
\end{figure}

Let $p\geq 2$ be an integer (not necessarily prime) and let $(n)_p$ denote the base-$p$ expansion of the integer $n\geq 0$. A sequence ${\bf a}$ is \emph{$p$-automatic} if there exists a DFAO $M$ with input alphabet $\Sigma_p=\{0,1,\dots,p-1\}$ such that ${\bf a}(n)=f_M((n)_p)$; we say that $M$ \emph{generates} ${\bf a}$. It is known that a sequence is $p$-automatic with respect to the forward-reading convention if and only if it is $p$-automatic with respect to the reverse-reading convention \cite[Thm 5.2.3]{AS}. (We prove a quantitative version of this fact in Proposition \ref{dimension bound}.) A DFAO with input alphabet $\Sigma_p$ is called a $p$-DFAO. 

Let ${\bf a}$ be a $p$-automatic sequence. As in the introduction, the \emph{forward-reading complexity} $N_p^f({\bf a})$ is the number of states in a minimal forward-reading $p$-DFAO that generates ${\bf a}$, and the \emph{reverse-reading complexity} $N_p({\bf a})$ is the number of states in a minimal reverse-reading $p$-DFAO that generates ${\bf a}$.

\begin{rem}\label{Minimality}
Base-$p$ expansions are only unique if we disallow leading zeros -- for example, the binary strings $11$ and $011$ both represent the integer $3$. This creates a minor ambiguity in the minimality of a generating DFAO for a sequence, as it may be the case that a larger DFAO is needed if we require that the same output is produced for \emph{every possible} base-$p$ expansion of an integer (it is not even \emph{a priori} clear that both definitions of ``$p$-automatic" are equivalent; see \cite[Thm 5.2.1]{AS}). Throughout this paper we enforce the stricter requirement that the generating DFAO gives the same output regardless of leading zeros. (This is necessary for minimality to translate correctly from automata to curves.)
\end{rem}

The canonical example of an automatic sequence is the 2-automatic Thue-Morse sequence:
$${\bf a}=01101001\cdots$$
where ${\bf a}(n)=f_T((n)_2)$ for the Thue-Morse automaton $T$. The term ${\bf a}(n)$ is the parity of the sum of the bits in the binary expansion of $n$. Note that $T$ generates ${\bf a}$ under both the forward-reading and reverse-reading conventions, and $T$ is obviously minimal, so $N_2({\bf a})=N_2^f({\bf a})=2$.

There are many characterizations of a sequence that are equivalent to being $p$-automatic. We mention two, which are relevant to computing state complexity. The first is due to Eilenberg and relies on the notion of the $p$-kernel of ${\bf a}$, which is defined to be the set of sequences $n\mapsto{\bf a}(p^in+j)$ for all $i\geq 0$ and $0\leq j\leq p^i-1$. The second actually holds in more generality than automatic sequences: it is an easy adaptation of the Myhill-Nerode theorem to the DFAO model.
\begin{thm}[Eilenberg]\label{Eilenberg}
For $p\geq 2$, the $p$-kernel of ${\bf a}$ is finite if and only if ${\bf a}$ is $p$-automatic. Moreover, $N_p({\bf a})$ is precisely the size of the $p$-kernel of ${\bf a}$.
\end{thm}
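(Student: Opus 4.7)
The plan is to exhibit a natural correspondence between reachable states of a reverse-reading $p$-DFAO that generates ${\bf a}$ and elements of the $p$-kernel $K$, and to derive both the qualitative equivalence and the quantitative equality $N_p({\bf a})=|K|$ from this correspondence. The key computational observation I would establish first is: if a word $w$ of length $i$ arises as the reversal of a base-$p$ expansion of an integer $j$ (with $0 \leq j < p^i$, allowing leading zeros in the original) and $q = \delta(q_0, w)$ is the reached state, then for every $n \geq 0$,
\[
\tau(\delta(q, (n)_p^R)) = \tau(\delta(q_0, w \cdot (n)_p^R)) = {\bf a}(p^i n + j),
\]
because the reversed concatenated word is a base-$p$ expansion of $p^i n + j$ (with possibly some leading zeros, harmless by Remark~\ref{Minimality}). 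This identifies the kernel element $n \mapsto {\bf a}(p^i n + j)$ with the output function of $M$ started at $q$.

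With this observation in hand, the ($\Leftarrow$) direction of the equivalence is immediate: if a DFAO with state set $Q$ generates ${\bf a}$, then the map sending a reachable state $q$ to its output function surjects onto $K$, so $|K| \leq |Q| < \infty$. For ($\Rightarrow$), assuming $K$ is finite, I would introduce shift operators $\sigma_d$ on sequences by $(\sigma_d b)(n) = b(pn + d)$ and observe that each $\sigma_d$ maps $K$ into itself (applied to ${\bf a}(p^i n + j)$ it produces ${\bf a}(p^{i+1}n + dp^i + j)$, again a kernel element). Then the DFAO with state set $K$, initial state ${\bf a}$, transitions $\delta(b, d) = \sigma_d b$, and output $\tau(b) = b(0)$ generates ${\bf a}$ by a straightforward induction on the length of the input: reading $(n)_p^R$ from ${\bf a}$ lands in the sequence $m \mapsto {\bf a}(p^{|{(n)_p}|} m + n)$, whose value at $m=0$ is ${\bf a}(n)$. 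This already gives $N_p({\bf a}) \leq |K|$.

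The interesting obstruction lies in the reverse inequality $|K| \leq N_p({\bf a})$. Let $M$ be a minimal reverse-reading DFAO generating ${\bf a}$ and let $\phi$ send a reachable state $q$ to its output function; by the key observation, $\phi$ maps into $K$ and is surjective. What I still need is injectivity, and here is the difficulty: Myhill--Nerode separates distinct states of $M$ by some word $v \in \Sigma_p^*$, but $\phi(q_1) \neq \phi(q_2)$ requires separation by a word of the restricted form $(n)_p^R$, which never ends in a superfluous zero. This is precisely where Remark~\ref{Minimality} is essential: any $v \in \Sigma_p^*$ decomposes as $v = (n)_p^R \cdot 0^k$ for some $n$ and $k$, and for any word $w$ reaching a reachable state $q$, the concatenations $w \cdot v$ and $w \cdot (n)_p^R$ are reversed base-$p$ representations of the same integer, so the leading-zeros convention forces $\tau(\delta(q, v)) = \tau(\delta(q, (n)_p^R))$. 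Hence the distinguishing input can always be taken of the desired form, $\phi$ is injective, and the equality $N_p({\bf a}) = |K|$ follows.
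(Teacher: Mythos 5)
The paper does not actually prove this theorem; it cites Eilenberg and Allouche--Shallit for the equivalence and Derksen for the exact count $N_p({\bf a}) = |K|$, explicitly flagging that the equality depends on the strict convention of Remark~\ref{Minimality}. Your argument is a correct, self-contained proof along the standard lines those sources take: identify kernel elements with reachable states of a reverse-reading DFAO via the key observation, and build the kernel automaton for the converse. You have correctly isolated the subtle point the paper warns about, namely that a Myhill--Nerode distinguishing word $v$ need not itself be of the form $(n)_p^R$, and that decomposing $v=(n)_p^R\cdot 0^k$ and invoking the leading-zeros convention is exactly what closes that gap.

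Two small points worth tidying. First, the injectivity argument in your last paragraph is more than you need: the key observation already gives a \emph{surjection} from the reachable states of any convention-respecting DFAO onto the $p$-kernel, and applying this to a minimal $M$ (all of whose states are reachable, since pruning unreachable states preserves $f_M$ and hence the convention) directly yields $|K|\le N_p({\bf a})$; combined with $N_p({\bf a})\le|K|$ from the kernel automaton, the equality follows. The bijectivity you establish is a pleasant structural refinement (it identifies the minimal DFAO with the kernel automaton up to isomorphism), but it is not required for the statement. Second, when you conclude $N_p({\bf a})\le|K|$ from the kernel automaton you should verify that this automaton itself respects the leading-zeros convention, since $N_p({\bf a})$ is defined as a minimum over automata satisfying it; this is immediate, because reading a trailing $0$ sends a state $b$ to the sequence $n\mapsto b(pn)$, whose value at $0$ is still $b(0)$.
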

\begin{proof}
See \cite[Prop V.3.3]{Eilenberg1974} or \cite[Prop 6.6.2]{AS}. See \cite[Prop 4.9]{Derksen} for the claim of minimality (this minimality is in the strict sense of Remark \ref{Minimality}, as a smaller DFAO may exist otherwise).
\end{proof}
Let $f:\Sigma^*\to\Delta$ be any function. For $x,y\in\Sigma^*$, define $x\sim y$ to mean $f(xz)=f(yz)$ for all $z\in\Sigma^*$. Then $\sim$ is an equivalence relation on $\Sigma^*$, called the Myhill-Nerode equivalence relation.
\begin{thm}[Myhill-Nerode]\label{Myhill-Nerode}
The equivalence relation $\sim$ has finitely many equivalence classes if and only if $f$ is a finite-state function. The number of equivalence classes of $\sim$ is the minimal number of states in a forward-reading DFAO $M$ such that $f=f_M$.
\end{thm}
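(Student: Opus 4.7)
The plan is to establish both directions together by constructing a canonical DFAO from the equivalence relation $\sim$, and then use a refinement argument to show this construction is optimal.

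First, assume that $\sim$ has finitely many equivalence classes. I would build a DFAO $M_{\sim}$ whose state set is $Q := \Sigma^*/{\sim}$, with initial state $[\varepsilon]$, transition function $\delta([w],a) := [wa]$, and output $\tau([w]) := f(w)$. The transition $\delta$ is well-defined because if $w \sim w'$ then for every suffix $z$ we have $f(waz) = f(w'az)$, so $wa \sim w'a$; the output $\tau$ is well-defined by taking $z = \varepsilon$ in the definition of $\sim$. A simple induction on word length then gives $\delta([\varepsilon], w) = [w]$, hence $f_{M_{\sim}}(w) = \tau([w]) = f(w)$, so $f$ is a finite-state function generated by a DFAO with exactly $|\Sigma^*/{\sim}|$ states.

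For the converse direction and the sharpness of the bound, suppose $f = f_M$ for some DFAO $M = (Q, q_0, \delta, \tau)$. Define an auxiliary equivalence relation on $\Sigma^*$ by $w \sim_M w'$ iff $\delta(q_0, w) = \delta(q_0, w')$; clearly $\sim_M$ has at most $|Q|$ classes, one per reachable state. The key observation is that $\sim_M$ refines $\sim$: if $\delta(q_0, w) = \delta(q_0, w')$, then for every $z \in \Sigma^*$, applying the prolonged transition function gives $\delta(q_0, wz) = \delta(q_0, w'z)$, and composing with $\tau$ yields $f(wz) = f(w'z)$, so $w \sim w'$. Therefore $|\Sigma^*/{\sim}| \leq |\Sigma^*/{\sim_M}| \leq |Q|$. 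This simultaneously proves the ``finite-state implies finitely many classes'' direction and shows that every DFAO computing $f$ has at least $|\Sigma^*/{\sim}|$ states. Combined with the explicit construction of $M_{\sim}$, this pins down the minimal state count as exactly $|\Sigma^*/{\sim}|$.

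There is no real obstacle here; the proof is essentially the classical Myhill-Nerode argument adapted from accept/reject languages to arbitrary output alphabets by replacing characteristic functions with the output map $\tau$. The only thing to check carefully is the well-definedness of $\delta$ on $\sim$-classes, which uses the closure of $\sim$ under right-appending a single letter — a direct consequence of the quantifier ``for all $z \in \Sigma^*$'' in the definition of $\sim$.
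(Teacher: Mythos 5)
Your proof is correct and is the standard Myhill--Nerode argument adapted to the DFAO setting, which is exactly what the paper invokes by citation (it defers to \cite[Thm 4.1.8, p.\ 149]{AS} rather than proving it). The two halves—the quotient-automaton construction showing $\sim$ finite implies finite-state with $|\Sigma^*/{\sim}|$ states, and the refinement $\sim_M \subseteq \sim$ giving the lower bound $|\Sigma^*/{\sim}| \leq |Q|$ for any $M$ with $f=f_M$—together establish both the equivalence and the exact minimal state count.
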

\begin{proof}
See \cite[Thm 4.1.8]{AS} and \cite[p.\ 149]{AS}. 
\end{proof}

\subsection{Automata from $p$-Representations}
Another way of characterizing $p$-automatic sequences is by $p$-representations. Let ${\bf a}$ be a sequence taking values in a field $k$. A $p$-\emph{representation} of ${\bf a}$ consists of a finite-dimensional vector space $V$ over $k$, a vector $v\in V$, a morphism of monoids $\phi:\Sigma_p^*\to\End(V)$, and a linear functional $\lambda\in V^*=\Hom(V,k)$, such that for any positive integer $n$,
$$
{\bf a}(n) = \lambda\phi((n)_p)v.
$$
A sequence that admits a $p$-representation is known as \emph{$p$-regular} \cite[Chapter 16]{AS}. Equivalently, its associated power series is \emph{recognizable} in the language of \cite{BR} (see also \cite{Salomaa}).

We also make the nonstandard but natural definition of a $p$-\emph{antirepresentation} of ${\bf a}$, which consists of the data of a representation except that $\phi:\Sigma_p^*\to \End(V)$ is an antimorphism of monoids, that is, $\phi(wv)=\phi(v)\phi(w)$. Antirepresentations on $V$ correspond to representations on the dual space $V^*$. In Proposition \ref{dimension bound} we show that when $k$ is finite of characteristic $p$, $p$-representations give rise to reverse-reading automata and $p$-antirepresentations give rise to forward-reading automata. 

An obvious necessary condition for a sequence to be automatic is that it assumes finitely many values. It is not hard to show that a sequence over a field is $p$-automatic if and only if it is both $p$-regular and assumes finitely many values (see \cite[Thm 16.1.5]{AS} or \cite[Thm V.2.2]{BR}). We give a quantitative proof of this fact when $k$ is a finite field, in which case the ``finitely many values" hypothesis holds trivially. Our argument will allow us to deduce a bound on state complexity.
\begin{prop}\label{dimension bound}
Let $p$ be a prime or prime power and let $k=\F_p$. The sequence ${\bf a}$ over $k$ is $p$-regular if and only if it is $p$-automatic. Furthermore, if ${\bf a}$ has a $p$-representation on a vector space $V$ of dimension $m$, then $N_p^f({\bf a})$ and $N_p({\bf a})$ are both at most $p^m$.
\end{prop}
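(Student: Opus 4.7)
The plan is to build explicit DFAOs directly from a given $p$-representation $(V, v, \phi, \lambda)$ of dimension $m$ over $\F_p$. Because $\F_p$ is finite, both $V$ and $V^* = \Hom(V, \F_p)$ have exactly $p^m$ elements, so any DFAO whose state set embeds into one of these sets automatically satisfies the desired bound.

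For the reverse-reading bound, I would define a DFAO $M_r$ with state set $V$, initial state $v$, transition $\delta(u, a) = \phi(a)u$, and output $\tau(u) = \lambda(u)$. If the canonical expansion is $(n)_p = d_k \cdots d_0$, then processing the reversed word $d_0 d_1 \cdots d_k$ in $M_r$ traces through the successive states $\phi(d_0)v, \phi(d_1)\phi(d_0)v, \ldots, \phi((n)_p)v$, ending with output $\lambda \phi((n)_p)v = {\bf a}(n)$. For the forward-reading bound, I would use the dual construction: let $M_f$ have state set $V^*$, initial state $\lambda$, transition $\delta(\mu, a) = \mu \circ \phi(a)$, and output $\tau(\mu) = \mu(v)$. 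Processing $(n)_p = d_k \cdots d_0$ in forward order yields the state $\lambda \circ \phi((n)_p)$, with final output $\lambda \phi((n)_p) v = {\bf a}(n)$. Both automata have at most $p^m$ states.

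The equivalence of $p$-regular and $p$-automatic sequences over $\F_p$ then follows: the above bound gives ``regular implies automatic,'' and the converse is the standard construction --- given a $p$-DFAO with state set $Q$, take $V = \F_p^Q$, let $\phi(a)$ be the linear map induced by the transition $q \mapsto \delta(q, a)$, let $v$ be the indicator of the initial state $q_0$, and let $\lambda$ be the linear functional $q \mapsto \tau(q)$.

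The main technical subtlety, which I expect to be the key obstacle, is that the defining identity ${\bf a}(n) = \lambda \phi((n)_p)v$ governs only the canonical base-$p$ expansion, while a DFAO satisfying Remark \ref{Minimality} must output ${\bf a}(n)$ on \emph{every} base-$p$ expansion of $n$, including those padded with leading zeros. To reconcile this, I would invoke the Myhill-Nerode theorem (Theorem \ref{Myhill-Nerode}) and Eilenberg's theorem (Theorem \ref{Eilenberg}) to bound the relevant equivalence classes and $p$-kernel elements directly: for $N_p^f$, sending a word $w$ representing a positive integer $n$ to $\lambda \circ \phi((n)_p) \in V^*$ refines the Myhill-Nerode equivalence and takes at most $|V^*| = p^m$ values; for $N_p$, each $p$-kernel element $n \mapsto {\bf a}(p^i n + j)$ is determined, for $n \geq 1$, by the vector $\phi([j]_i)v \in V$, where $[j]_i$ denotes the length-$i$ zero-padded base-$p$ expansion of $j$. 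The corner cases of $n = 0$ and all-zero input words absorb into the already-counted classes, giving the bounds $N_p({\bf a}), N_p^f({\bf a}) \leq p^m$.
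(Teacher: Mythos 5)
Your constructions are the same as the paper's: you build the reverse-reading DFAO by embedding states in $V$ with transitions given by the $\phi(i)$ (the paper restricts the state set to the orbit $\{\phi(w)v : w\in\Sigma_p^*\}$, you take all of $V$; this changes nothing for the $p^m$ bound), you build the forward-reading DFAO by dualizing to $V^*$ with the transpose action, and the converse direction is the regular representation on $\F_p^{|Q|}$ built from a DFAO. So the core argument is identical to the paper's proof of Proposition \ref{dimension bound}.

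The one place you go beyond the paper is in flagging the leading-zeros subtlety, and you are right that it exists: the identity ${\bf a}(n)=\lambda\phi((n)_p)v$ constrains only canonical expansions, while $N_p({\bf a})$ is defined (Remark \ref{Minimality}, Theorem \ref{Eilenberg}) using the strict, padding-insensitive convention. The paper's proof simply asserts ``it is a matter of unraveling notation to see that $M$ outputs the sequence ${\bf a}$'' without addressing this. However, your resolution --- ``the corner cases of $n=0$ and all-zero input words absorb into the already-counted classes'' --- is stated but not argued, and it does not hold for an arbitrary $p$-representation: the tail of a kernel sequence $K_{i,j}(n)={\bf a}(p^i n+j)$ for $n\geq 1$ is determined by $\phi([j]_i)v$, but the head $K_{i,j}(0)={\bf a}(j)$ is not, so two kernel sequences can share a vector in $V$ while differing at $n=0$ (this can already happen in dimension $2$ when $\phi(0)$ is non-invertible). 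The clean way to close the gap is to observe that it suffices to know $\lambda\phi(0)^k u=\lambda u$ for every $u$ in the orbit of $v$; in the representation the paper actually uses later (where $\lambda$ reads the constant term of a power series and $\phi(0)$ is $q$-ary decimation with offset $0$) this is automatic, since the $n=0$ value is literally a coordinate of the vector. Without such a normalization, your absorption claim is a genuine gap --- one that the paper's own proof shares.
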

\begin{proof}

First assume that ${\bf a}$ is $p$-automatic. There exists a reverse-reading $p$-DFAO $M$ such that $f_M((n)_p^R)={\bf a}(n)$. We construct a representation for ${\bf a}$ analogous to the regular representation in group theory. Let $q_1,\dots,q_m$ be the states of $M$ and let $V=k^m$. Let $v=e_1$, the first standard basis vector of $V$. For $i\in\Sigma_p$, define the matrix $\phi(i)\in k^{m\times m}\simeq \End(V)$ by
$$
   \phi(i)_{a,b}=\left\{
     \begin{array}{ll}
       1, & \text{if }\delta_i(q_b)=q_a; \\
       0, & \text{otherwise;}
     \end{array}
   \right.
$$
and extend $\phi$ to a morphism from $\Sigma_p^*$ to $k^{m\times m}$. Let $\lambda$ be defined by $\lambda(e_j)=\tau(q_j)$ for each $j$ and extended linearly to a functional $\lambda:V\to k$. This defines a $p$-representation of ${\bf a}$, which can be pictured as embedding the states of $M$ into $V$ and realizing the transition function as a set of $p$ linear transformations.

Now assume instead that ${\bf a}$ is $p$-regular. Let $(V,v,\phi,\lambda)$ be a $p$-representation of ${\bf a}$ with $\dim V=m$. We construct a reverse-reading DFAO $M$ as follows. The initial state $q_0$ is $v$, the set of states is $Q=\{\phi(w)v : w\in\Sigma_p^*\}$, the transition function is given by $\delta(w,i)=\phi(i)(w)$, and the output function is $\tau(w)=\lambda(w)$. It is a matter of unraveling notation to see that $M$ outputs the sequence ${\bf a}$, and $M$ has at most $|V|=p^m$ states.

We now construct a $p$-antirepresentation for the sequence ${\bf a}$. Let $\phi^T:\Sigma_p^*\to\End(V^*)$ be the antimorphism defined by $\phi^T(w)=\phi(w)^T$, where $T$ denotes transpose. Now $(V^*,\lambda,\phi^T,v)$ is a $p$-antirepresentation of ${\bf a}$, where we identify $V$ with $(V^*)^*$ in the natural way. If $(n)_p=c_u\dots c_1c_0$, then
$${\bf a}(n)=\lambda\phi((n)_p)v=\lambda\phi(c_u)\cdots\phi(c_0)v,$$
so thinking of $v$ as an element of $(V^*)^*$, we have
$$ {\bf a}(n)=v(\lambda\phi(c_u)\cdots\phi(c_0))=v\phi^T(c_0)\cdots\phi^T(c_u)\lambda=v\phi^T(c_u\cdots c_0)\lambda = v\phi^T((n)_p)\lambda.$$
This corresponds to a forward-reading DFAO $M$ in the following way: let the initial state $q_0$ of $M$ be $\lambda$, the set of states be $Q=\{\phi^T(w)\lambda:w\in\Sigma_p^*\}$, the transition function be $\delta(\mu,i)=\phi^T(i)(\mu)$, and the output function be $\tau(\mu)=v^T(\mu)=\mu(v)$. Taking transposes has the effect of reversing input words, so this gives a forward-reading DFAO that outputs the sequence ${\bf a}$, and it has at most $p^m$ states, as $\dim V=\dim V^*$.
\end{proof}

\begin{rem}
A special case of the antirepresentation constructed in Proposition \ref{dimension bound} gives a standard result of automata theory: if $\Delta=\{0,1\}$, so that $M$ either accepts or rejects each input string, and $M$ has $n$ states, then a minimal reversed automaton for $M$ has at most $2^n$ states. We can identify $\Delta$ with $\F_2$, and the regular representation is on the vector space $\F_2^n$.
\end{rem}

The representation constructed in Proposition \ref{dimension bound} produces an $p$-DFAO where the states are identified with a subset of $V$ and the transitions are realized as linear transformations. In general, this is not a minimal DFAO. Much of our work in the rest of the paper will be describing canonical representations that produce minimal DFAOs. 

Somewhat surprisingly, for any $p$-representation of ${\bf a}$, the forward-reading $p$-DFAO produced by the antirepresentation in Proposition \ref{dimension bound} is minimal as long as $V$ equals the linear span of $\{\phi(w)v:w\in\Sigma_p^*\}$. This assumption on $V$ loses no generality, because we can always replace $V$ with this subspace (in particular, satisfying this assumption does \emph{not} mean the corresponding reverse-reading automaton is minimal). This observation is to our knowledge new, and we prove it in Proposition \ref{forward minimal} below. In a sense, this is an analogue via representations of the minimization algorithm of Brzozowski \cite{Brzozowski,Shallit2}.

\begin{prop}\label{forward minimal}
Let ${\bf a}$ be a sequence taking values in a finite field $k$, and let $(V,v,\phi,\lambda)$ be a $p$-representation of ${\bf a}$. Assume without loss of generality that $V$ is the $k$-linear span of $\{\phi(w)v:w\in\Sigma_p^*\}$. The DFAO $M$ corresponding to the antirepresentation in Proposition \ref{dimension bound} is a minimal forward-reading $p$-DFAO that generates ${\bf a}$.
\end{prop}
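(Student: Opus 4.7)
The plan is to apply the Myhill-Nerode theorem (Theorem \ref{Myhill-Nerode}) to the function $f:\Sigma_p^*\to k$ defined by $f(w)=\lambda\phi(w)v$, whose restriction to canonical base-$p$ expansions equals $\mathbf{a}$. Since a minimal forward-reading DFAO computing $f$ has exactly one state per Myhill-Nerode equivalence class, it suffices to verify two things: that every state of $M$ is reachable from the initial state, and that distinct states lie in distinct equivalence classes. Reachability is immediate from the construction, because $Q=\{\phi^T(w)\lambda:w\in\Sigma_p^*\}$ and, by the antimorphism identity $\phi^T(w_1w_2)=\phi^T(w_2)\phi^T(w_1)$, reading $w$ from the initial state $\lambda$ leads exactly to $\phi^T(w)\lambda$.

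For the separation step, consider two states $\mu_i=\phi^T(w_i)\lambda$ for $i=1,2$. Reading any suffix $z\in\Sigma_p^*$ from $\mu_i$ lands at $\phi^T(z)\mu_i=\phi(z)^T\mu_i$, with output
$$\tau\bigl(\phi(z)^T\mu_i\bigr)=\bigl(\phi(z)^T\mu_i\bigr)(v)=\mu_i\bigl(\phi(z)v\bigr).$$
Hence $\mu_1$ and $\mu_2$ are Myhill-Nerode equivalent precisely when the functional $\mu_1-\mu_2\in V^*$ vanishes on every vector of the form $\phi(z)v$ with $z\in\Sigma_p^*$.

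This is the one place where the spanning hypothesis on $V$ does work: by assumption $V$ is the $k$-span of $\{\phi(z)v:z\in\Sigma_p^*\}$, so a functional vanishing on this set is identically zero on $V$. Therefore $\mu_1\sim\mu_2$ forces $\mu_1=\mu_2$, and Theorem \ref{Myhill-Nerode} delivers the minimality of $M$. The argument is really a duality statement: the spanning assumption says $V$ has no redundant directions, and this is exactly the separation property needed to rule out redundant states in $V^*$. I expect the only subtlety, rather than a real obstacle, is keeping track of the leading-zeros convention of Remark \ref{Minimality}, which is handled automatically because Myhill-Nerode equivalence is tested over all of $\Sigma_p^*$ and not merely over canonical base-$p$ expansions.
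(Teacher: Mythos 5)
Your proof is correct and follows essentially the same route as the paper: apply Myhill--Nerode, identify the function computed by $M$ as $w\mapsto\lambda\phi(w)v$, and use the spanning hypothesis to show that the map $w\mapsto\phi^T(w)\lambda$ induces a bijection between Myhill--Nerode classes and the states of $M$. Your explicit split into reachability and separation, and your remark on leading zeros, are presentation choices but the underlying argument is the same.
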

\begin{proof}
The state set of $M$ is $Q=\{\phi^T(w)\lambda:w\in\Sigma_p^* \}$ with initial state $\lambda$, the transition function is $\delta(\mu,i)=\phi(i)^T(\mu)$, and the output function is $\tau(\mu)=\mu(v)$ for some fixed $v\in V$. We show that the states of $Q$ are in one-to-one correspondence with the Myhill-Nerode equivalence classes of the finite-state function $f_M$ as in Theorem \ref{Myhill-Nerode}.

Let $[x]$ be the equivalence class of $x\in\Sigma_p^*$. We need to show that $[x]=[y]$ if and only if $\phi^T(x)(\lambda)=\phi^T(y)(\lambda)$. We have
\begin{equation*}
[x] = \{y\in\Sigma_p^*:\tau(\phi^T(xz)\lambda)=\tau(\phi^T(yz)\lambda)\text{ for all }z\in\Sigma_p^*\} 
\end{equation*}
and the computation 
$$\tau(\phi^T(xz)\lambda)=\tau(\lambda \phi(xz))=\lambda\phi(x)\phi(z)v$$ shows that
\begin{align*}
[x] & = \{y\in\Sigma_p^*: \lambda\phi(x)\phi(z)v=\lambda\phi(y)\phi(z)v\text{ for all }z\in\Sigma_p^*\}\\
& = \{y\in\Sigma_p^*: \lambda\phi(x)=\lambda\phi(y)\text{ in }V^*\}
\end{align*}
because $V$ is the span of the set $\{\phi(z)v:z\in\Sigma_p^*\}$. So $[x]$ is the precisely the set of all $y$ such that $\phi^T(x)\lambda=\phi^T(y)\lambda$. By the Myhill-Nerode theorem, $M$ is a minimal forward-reading DFAO that generates ${\bf a}$.
\end{proof}

\subsection{Power Series and Bounds on Degree and Height}
We develop some standard machinery that is used in the proof of Christol's theorem. Let $k$ be a perfect field of characteristic $p$, for example, a finite field $\F_{p^r}$. Let $y=\sum_{n=-\infty}^\infty {\bf a}(n) x^n\in k((x))$, where ${\bf a}(n)=0$ for all sufficiently large negative $n$. Define
\begin{equation}
\Lambda_i(y)=\sum_{n=-\infty}^\infty {\bf a}(pn+i)^{1/p}x^n.
\end{equation}
The operators $\Lambda_i$ are $\F_p$-linear (not necessarily $k$-linear) endomorphisms of the field $k((x))$. They are known in this context as Cartier operators. Observe that 
\begin{align}
y & = \sum_{i=0}^{p-1}\sum_{n=-\infty}^\infty {\bf a}(pn+i)x^{pn+i} = \sum_{i=0}^{p-1}x^i\sum_{n=-\infty}^\infty  {\bf a}(pn+i)x^{pn} = \sum_{i=0}^{p-1}x^i\left(\sum_{n=-\infty}^\infty {\bf a}(pn+i)^{1/p}x^{n}\right)^p
\end{align}
and therefore
\begin{equation}\label{eqn: power series decimation}
y = \sum_{i=0}^{p-1} x^i (\Lambda_i(y))^p.
\end{equation}

If $y=\sum_{n=0}^\infty {\bf a}(n)x^n\in\F_p[[x]]$, it is easy to see that the $p$-kernel of ${\bf a}$ is in bijection with the orbit of $y$ under the monoid generated by the $\Lambda_i$ operators, as taking $p$th roots fixes each element of $\F_p$. If $y=\sum_{n=0}^\infty {\bf a}(n)x^n\in\F_q[[x]]$ for $q=p^r$, then applying $r$-fold compositions of the $\Lambda_i$ operators gives $q$-ary decimations of the sequence ${\bf a}$. That is, if $$c=i_r p^{r-1}+i_{r-1}p^{r-2}+\dots+i_2p+i_1,$$ where each $i_j\in\{0,\dots,p-1\}$, then
\begin{equation}
\Lambda_{i_1}\Lambda_{i_2}\cdots\Lambda_{i_r}(y)=\sum_{n=0}^\infty  {\bf a}(p^rn+i_rp^{r-1}+\dots+i_2p+i_1)^{1/p^r}x^{n}=\sum_{n=0}^\infty {\bf a}(qn+c)x^{n}
\end{equation}
because $\F_q$ is fixed under taking $q$th roots. It follows that
\begin{equation}\label{eqn: q power series decimation}
y = \sum_{0\leq i_1,\dots,i_r\leq p-1}x^{i_rp^{r-1}+\dots+i_2p+i_1}(\Lambda_{i_1}\Lambda_{i_2}\dots\Lambda_{i_r}(y))^q.
\end{equation}

\begin{rem}
If $k=\F_q$, the operators $\Lambda_i$ are usually defined by $\Lambda_i(y)=\sum_{n=-\infty}^\infty  {\bf a}(qn+i)x^n$, for example in \cite{ABVanishing,ABDiagonal,AS}. With this definition, Equation \ref{eqn: q power series decimation} takes on the much simpler form $y = \sum_{i=0}^{q-1} x^i (\Lambda_i(y))^q$. However, our definition fits more naturally into the geometric setting of Section 3 because it is invariant under base extension, whereas the usual definition depends on a choice of $\F_q$ fixed in advance.
 \end{rem}

Continue to assume that $y\in\F_q[[x]]$, where $p$ is prime and $q=p^r$. Define $S_q$ to be the monoid generated by all $r$-fold compositions of the $\Lambda_i$ operators. The $q$-kernel of ${\bf a}$ is in bijection with the orbit of $y$ under $S_q$, which we denote $S_q(y)$. If ${\bf a}$ is a $q$-automatic sequence, then by Eilenberg's Theorem $S_q(y)$ is finite and $|S_q(y)|=N_q({\bf a})$. Moreover, we have a $q$-representation for ${\bf a}$: $V$ is the finite-dimensional $\F_q$-subspace of $\F_q[[x]]$ spanned by the power series whose coefficient sequences are in the $q$-kernel of ${\bf a}$, $\phi$ is defined so that for $i\in\Sigma_q$, $\phi(i)$ maps $\sum_{n=0}^\infty  {\bf a}(n)x^n$ to $\sum_{n=0}^\infty {\bf a}(qn+i)x^n$ by the $r$-fold composition of the $\Lambda_i$ operators given in Equation \ref{eqn: q power series decimation}, and the linear functional $\lambda$ maps a power series to its constant term. 

The standard proof of the ``algebraic implies automatic" half of Christol's theorem (\cite{CKMR,Christol}; \cite[Thm 12.2.5]{AS}) follows from the observation that $y$ is algebraic if and only if it lies in a finite-dimensional $\F_q$-subspace of $\F_q((x))$ invariant under $S_q$. Given an algebraic power series $y$, it is easy to construct an invariant space using Ore's lemma \cite[pp.\ 355--356]{AS}, which leads to the prior bounds on state complexity mentioned in the introduction, but the dimension of the space constructed is often far larger than the dimension of the linear span of $S_q(y)$. We achieve a sharper bound on the dimension by introducing some relevant machinery from algebraic geometry in the next section. First we demonstrate some easy upper bounds on height and degree in terms of reverse-reading state complexity that can be extracted from the usual proof of Christol's theorem.

\begin{prop}\label{coarse degree bound}
Let $y=\sum_{n=0}^\infty {\bf a}(n) x^n\in\F_q[[x]]$. Assume ${\bf a}$ is $q$-automatic and $N_q({\bf a})=m$. Then $y$ is algebraic, $\deg (y)\leq q^m-1$, and $h(y)\leq mq^{m+1}$. 
\end{prop}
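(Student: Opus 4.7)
The plan is to combine the $S_q$-stability of the $\F_q$-linear span of the orbit $S_q(y)$ with a Frobenius-style Ore-elimination, tracking $x$-degrees carefully to extract both bounds from a single polynomial relation. By Eilenberg's theorem and the bijection between the $q$-kernel of $\mathbf{a}$ and the orbit $S_q(y)$ established in this section, $|S_q(y)| = m$. Let $V \subseteq \F_q[[x]]$ be the $\F_q$-linear span of $S_q(y)$, so $\dim_{\F_q} V = m' \leq m$, $y \in V$, and $V$ is stable under $S_q$ (an $r$-fold composition of Cartier operators is $\F_q$-linear since the semilinear factor $c \mapsto c^{1/q}$ is trivial on $\F_q$). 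Fix a basis $v_1 = y, v_2, \ldots, v_{m'}$ of $V$; applying Equation \ref{eqn: q power series decimation} to each $v_j$ and using $S_q$-stability yields identities
$$v_j = \sum_l A_{jl}(x)\, v_l^q, \qquad \deg_x A_{jl} \leq q-1.$$

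Iterating these substitutions $i$ times and tracking degrees by induction gives $v_j = \sum_l B^{(i)}_{jl}(x)\, v_l^{q^i}$ with $\deg_x B^{(i)}_{jl} \leq q^i - 1$. Writing $y = \sum_j a_j v_j$ with $a_j \in \F_q$ (so $a_j^{q^k} = a_j$), raising to the $q^k$ power, and specializing $i = m-k$, I obtain
$$y^{q^k} = \sum_l D_{kl}(x)\, v_l^{q^m}, \qquad \deg_x D_{kl} \leq q^m - q^k,$$
for $k = 0, 1, \ldots, m$. Thus the $m+1$ power series $y, y^q, \ldots, y^{q^m}$ all lie in the $\F_q(x)$-span of the $m' \leq m$ elements $v_1^{q^m}, \ldots, v_{m'}^{q^m}$, and so must satisfy a nontrivial $\F_q(x)$-linear relation $\sum_{k=0}^{m} c_k(x)\, y^{q^k} = 0$.

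To bound the $x$-degrees of the $c_k$, I apply a Cramer's-rule / cofactor argument to the coefficient matrix $(D_{kl})$: such a dependence can be chosen with each $c_k(x) \in \F_q[x]$ equal (up to sign) to an $r \times r$ minor of $(D_{kl})$, where $r = \rank(D_{kl}) \leq m'$, so $\deg_x c_k \leq r \cdot q^m \leq m q^{m+1}$. Letting $j$ be the least index with $c_j \neq 0$ and factoring $T^{q^j}$ out of $\sum_k c_k(x)\, T^{q^k}$ (the trivial case $y=0$ being handled separately), I obtain $f(x,T) \in \F_q[x,T]$ with $f(x,y)=0$, $T$-degree at most $q^m - q^j \leq q^m - 1$, and $x$-degree at most $m q^{m+1}$. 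This gives $\deg(y) \leq q^m - 1$ and $h(y) \leq m q^{m+1}$. The main technical obstacle is the careful bookkeeping of $x$-degrees through the Frobenius iteration; once those estimates are in hand the remaining steps are straightforward linear algebra.
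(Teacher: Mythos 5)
Your proof is correct and takes essentially the same approach as the paper: iterate the decimation identity to put $y, y^q, \ldots, y^{q^m}$ into a common $\F_q(x)$-span of at most $m$ elements, extract a linear dependence, bound coefficient degrees by tracking the $x$-degree growth through the iteration, and cancel a power of $y$ to get the degree bound. Your bookkeeping (iterating only to $q^m$ and using the $\F_q$-linear span of the orbit rather than the orbit itself, together with the explicit Cramer/minor argument) actually yields the slightly sharper $h(y)\leq mq^m$, but this is just a refinement of the paper's "standard argument in linear algebra" giving $mq^{m+1}$.
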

\begin{proof}
Let $S_q(y)=\{y_1,\dots,y_m\}$. From Equation \ref{eqn: q power series decimation}, for each $i\in\{1,\dots,m\}$ we have
$$y_i\in\langle y_1^q,\dots,y_m^q\rangle,$$
where the angle brackets $\langle\dots\rangle$ indicate $\F_q(x)$-linear span. So
$$y_i^q\in\langle y_1^{q^2},\dots,y_m^{q^2}\rangle,$$
and eventually 
$$y_i^{q^m}\in\langle y_1^{q^{m+1}},\dots,y_m^{q^{m+1}}\rangle.$$
Therefore
$$\{y_i,y_i^q,y_i^{q^2},\dots,y_i^{q^m}\}\subseteq \langle y_1^{q^{m+1}},\dots,y_m^{q^{m+1}}\rangle,$$
which forces an $\F_q(x)$-linear relation among $y_i,y_i^q,y_i^{q^2},\dots,y_i^{q^m}$, that is, an algebraic equation satisfied by $y_i$. In particular, $y$ is algebraic, which proves the ``automatic implies algebraic" direction of Christol's theorem. If $y\neq 0$ we can cancel $y$ to deduce $\deg y\leq q^m-1$ (if $y=0$ this is trivially true).

Working through the chain of linear dependences shows that each $y_i^{q^k}$ can be written as a linear combination of $\{y_1^{q^{m+1}},\dots,y_m^{q^{m+1}}\}$ with polynomial coefficients of degree at most $q^{m+1}$. A standard argument in linear algebra shows that there is a vanishing linear combination of $\{y_i,y_i^q,\dots,y_i^{q^m}\}$ with polynomial coefficients of degree at most $mq^{m+1}$.
\end{proof}

It is easy to construct infinite families of power series for which degree and height grow exponentially in $N_q({\bf a})$, which we do in Examples \ref{height example} and \ref{degree example}. It is not clear whether the bounds of Proposition \ref{coarse degree bound} are sharper than these families indicate.

\begin{exa}\label{height example}

Let $y=x^n$ and let ${\bf a}$ be the sequence with a $1$ in the $n$th position and $0$ in every other position.  We have $N_q({\bf a})=\lceil\log_q(n)\rceil + 1$, because a $q$-DFAO generating ${\bf a}$ needs $\lceil\log_q(n)\rceil$ states to recognize the base-$q$ expansion of $n$ and one additional ``trap state" that outputs zero on any input that deviates from this expansion. So $h(y)$ grows exponentially in the number of states required. For example, Figure \ref{One word} gives a 3-DFAO that outputs 1 on the word 201 and 0 otherwise.
\begin{figure}[ht]
\caption{Minimal 3-DFAO that outputs 1 on 201 and 0 otherwise}
\includegraphics[scale=.75]{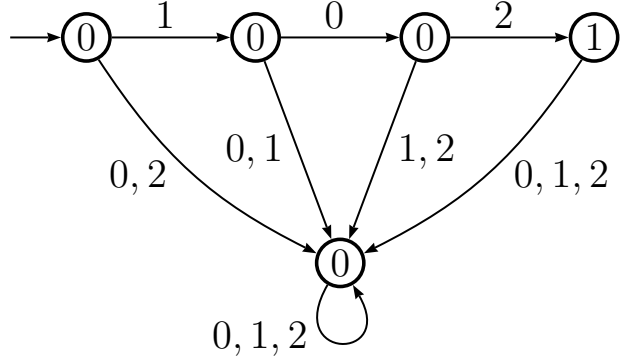}
\label{One word}
\end{figure}
\end{exa}

\begin{exa}\label{degree example}

The degree bound of Proposition \ref{coarse degree bound} is nearly sharp for those degrees that are powers of $q$. We argue that the unique solution in $\F_q[[x]]$ to the Artin-Schreier equation
\begin{equation*}
y^{q^m}-y =x,
\end{equation*}
which is
\begin{equation*}
y = x + x^{q^m} + x^{q^{2m}} + x^{q^{3m}} + \cdots,
\end{equation*}
satisfies $N_q({\bf a})=m+2$, so $\deg(y)=q^{N_q({\bf a})-2}$. As usual, we identify the states of a reverse-reading $q$-automaton that outputs ${\bf a}$ with the orbit of $y$ under $S_q$. We compute
\begin{align*}
\phi(0)(y) & =\sum_{n=0}^\infty {\bf a}(qn)x^n = x^{q^{m-1}} + x^{q^{2m-1}}+x^{q^{3m-1}}+\cdots,\\
\phi(1)(y) & =  \sum_{n=0}^\infty {\bf a}(qn+1)x^n = 1,
\end{align*}
and for $2\leq c\leq q-1$, $\phi(c)(y)=\sum_{n=0}^\infty {\bf a}(qn+c)x^n=0$. It is clear that $S_q(1)=\{0,1\}$, and
\begin{align*}
\phi(0)^2(y) & = x^{q^{m-2}} + x^{q^{2m-2}}+x^{q^{3m-2}}+\cdots,\\
\phi(0)^3(y) & = x^{q^{m-3}} + x^{q^{2m-3}}+x^{q^{3m-3}}+\cdots,\\
& \vdots \\
\phi(0)^{m-1}(y) & =  x^{q} + x^{q^{m+1}}+x^{q^{2m+1}}+\cdots,\\
\phi(0)^m(y) & =  x + x^{q^{m}}+x^{q^{2m}}+\cdots=y.
\end{align*}
Except for $y$, the power series in this list are all $q$th powers, so any element of $S_q$ that includes a $\Lambda_i$ operator other than $\Lambda_0$ sends each one to zero. By Eilenberg's theorem, a minimal reverse-reading automaton that outputs ${\bf a}$ has $m+2$ states. Figure \ref{fig: degree example} depicts such an automaton for $m=4$. Any undrawn transition arrow leads to a trap state $q_T$ (not pictured) where $\delta(q_T,i)=q_T$ for every $i\in\Sigma_q$ and $\tau(q_T)=0$.

\begin{figure}[ht]
\caption{Minimal $q$-DFAO generating the coefficients of $y$, where $y^{q^4}-y=x$}
\includegraphics[scale=.75]{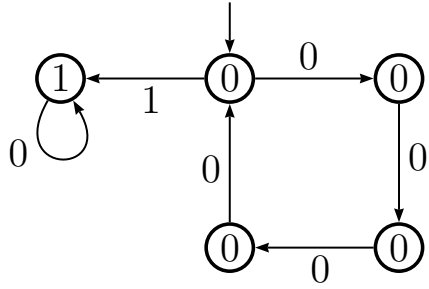}
\label{fig: degree example}
\end{figure}

\end{exa}

A sequence is $p$-automatic if and only if it is $p^r$-automatic for any $r\geq 1$ \cite[Thm 6.6.4]{AS}, so it makes sense to discuss the base-$p$ state complexity of ${\bf a}$, as well as the base-$q$ state complexity. In fact, Christol's theorem is usually stated in the equivalent form that for any $r\geq 1$, a power series over $\F_{p^r}$ is algebraic if and only if its coefficient sequence is $p$-automatic. The next proposition shows that, for a given $q$ and $p$, there is no qualitative difference between base-$p$ and base-$q$ complexity, in the sense they are at most a multiplicative constant apart.

\begin{prop}\label{p vs q}
Let ${\bf a}$ be $p$-automatic and $q=p^r$. Then $$N_q({\bf a})\leq N_p({\bf a})\leq \frac{q-1}{p-1}N_q({\bf a})$$ and $$N_q^f({\bf a})\leq N_p^f({\bf a})\leq \frac{q-1}{p-1}N_q^f({\bf a}).$$
\end{prop}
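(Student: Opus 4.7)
The plan is a pair of direct automaton constructions, one for each inequality. For the first inequality, $N_q({\bf a})\le N_p({\bf a})$ and $N_q^f({\bf a})\le N_p^f({\bf a})$, given any $p$-DFAO generating ${\bf a}$, I would construct a $q$-DFAO on the same state set by collapsing $r$ consecutive base-$p$ transitions into a single base-$q$ transition: reading the base-$q$ digit $c = c_0 + c_1 p + \cdots + c_{r-1} p^{r-1}$ amounts to applying $\delta_p$ to its base-$p$ digits in the appropriate order (least-significant-first for reverse-reading, most-significant-first for forward-reading). The output function is unchanged, and leading-zero consistency of the $q$-DFAO transfers from that of the $p$-DFAO because one base-$q$ leading zero corresponds to exactly $r$ base-$p$ leading zeros.

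For the second inequality, $N_p({\bf a})\le \frac{q-1}{p-1} N_q({\bf a})$ (and its forward analogue): from a $q$-DFAO $M_q$ with $n$ states I would construct a $p$-DFAO $M_p$ whose states are triples $(s,v,k)$ with $s \in Q_q$, $k \in \{0,1,\ldots,r-1\}$ counting the base-$p$ digits read so far into the current partial base-$q$ group, and $v \in \{0,1,\ldots,p^k-1\}$ the accumulated base-$p$ value. The state count is
$$n\cdot\sum_{k=0}^{r-1} p^k \;=\; n\cdot\frac{q-1}{p-1},$$
as required. Reading a new base-$p$ digit extends the partial accumulator or, when $k+1=r$, completes the current group by applying $\delta_q$ to the completed base-$q$ digit and resetting $(v,k)$ to $(0,0)$. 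The output at a partial state $(s,v,k)$ is $\tau_q(\delta_q(s,v))$, corresponding to padding the partial group with zeros in the appropriate positions.

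For the reverse-reading case, correctness and leading-zero consistency are essentially automatic: reverse-reading leading-zero pads are processed last, advance $k$ without changing $v$ (a new zero contributes $0\cdot p^k = 0$), and leave the output $\tau_q(\delta_q(s,v))$ invariant; when $k$ hits $r$, the subsequent $\delta_q$-transition on $v$ followed by further zero processing is stable by the leading-zero consistency of $M_q$. I expect the main obstacle to be the forward-reading case: leading zeros now arrive first, and the naive adaptation of the construction misaligns the grouping of base-$p$ digits into base-$q$ groups whenever the length of $(N)_p$ is not a multiple of $r$. Some delicacy is required to design the initial state of $M_p$ so that leading-zero consistency holds, likely by routing successive zeros through the cycle $(s_0,0,0), (s_0,0,1), \ldots, (s_0,0,r-1)$ and back, with $\delta_q(s_0,0)=s_0$ ensuring the cycle closes and the equality $\tau_q(\delta_q(s_0,0))=\tau_q(s_0)$ propagated along the cycle yields Myhill--Nerode equivalence between these states and the initial state.
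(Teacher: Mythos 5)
Your reverse-reading argument is correct, but it proceeds by a genuinely different route than the paper. The paper works directly inside a minimal reverse-reading $p$-DFAO: by Eilenberg's theorem its states are the $p$-kernel $S_p(y)$, the $q$-kernel $S_q(y)\subseteq S_p(y)$ is the state set of a sub-DFAO $M_q$ computing the same sequence, and every element of $S_p(y)$ can be written as $\Lambda_{i_1}\cdots\Lambda_{i_j}(z)$ with $z\in S_q(y)$ and $j<r$, which gives the count $\left(1+p+\cdots+p^{r-1}\right)N_q({\bf a})$ by pure bookkeeping. Your approach builds a (possibly non-minimal) $p$-DFAO from $M_q$ with explicit states $(s,v,k)$ and then checks it generates ${\bf a}$ with leading-zero consistency. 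Both are valid; the paper's counting is shorter, while your construction is more concrete and your check of leading-zero consistency for the reverse case is carried out correctly. Your first inequality (collapsing $r$ consecutive $p$-transitions into one $q$-transition) also matches the paper's implicit observation that $S_q(y)\subseteq S_p(y)$.

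The forward-reading second inequality, however, has a real gap that your sketch does not repair. You correctly identify the obstruction: the partition of $(n)_p$ into blocks of $r$ base-$p$ digits must be anchored at the low-order end, but a forward-reading automaton consumes the high-order end first and has no access to $|(n)_p|\bmod r$ until the input is exhausted. Routing initial zeros through a cycle $(q_0,0,0),\dots,(q_0,0,r-1)$ only absorbs a leading-zero prefix; the moment the first nonzero digit arrives your automaton packages the next $r$ base-$p$ digits as a supposed leading base-$q$ digit and feeds it to $\delta_q$, which is simply the wrong digit whenever $|(n)_p|\not\equiv 0\pmod r$, so the simulated $M_q$ wanders to an incorrect state and there is no later correction. (There is also no sensible way to ``pad the partial group with zeros'' in forward reading, since trailing zeros change the value of $n$.) The paper sidesteps the alignment problem entirely by dualizing: by Propositions \ref{dimension bound} and \ref{forward minimal} the states of the minimal forward-reading DFAO are the orbit of the residue/constant-term functional $\lambda$ under the transposed operators, and the same ``prefix of length $<r$ times an element of $S_q^T$'' decomposition applied to that orbit yields the $\frac{q-1}{p-1}$ factor. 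Your construction cannot be completed without switching to this dual viewpoint (or some other device that does not require knowing the alignment up front).
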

\begin{proof}
First we handle reverse-reading complexity. Without loss of generality, assume that the output alphabet $\Delta$ of the DFAO that produces ${\bf a}$ is a subset of $\F_{p^N}$ for some $N$ with $\F_q\subseteq\F_{p^N}$. The lower bound on $N_p({\bf a})$ is clear from the fact that $S_q(y)\subseteq S_p(y)$ for $y=\sum {\bf a}(n)x^n$.

For the upper bound, let $M_p$ be a minimal reverse-reading $p$-DFAO that outputs ${\bf a}$. Observe that $M_p$ contains the $q$-DFAO $M_q$ (which also outputs ${\bf a}$) as a ``sub-DFAO'', where the transitions in $M_q$ are achieved by following $r$-fold transitions inside $M_p$. So the states of $M_p$ that are not in $M_q$ comprise at most one $p$-ary tree of height $r$ rooted at each state of $M_q$. So
$$N_p({\bf a})\leq (1+p+p^2+\dots+p^{r-1})|M_q|=\frac{p^r-1}{p-1}N_q({\bf a}),$$
which yields the claimed inequality.

To pass to forward-reading state complexity, follow the dualizing construction of Proposition \ref{dimension bound} to embed the states of a forward-reading $p$-DFAO in some vector space over $\F_{p^N}$. Then let $S_p^T$ and $S_q^T$ be monoids consisting of the transposes of the operators in $S_p$ and $S_q$. The same arguments as above now apply.
\end{proof}

\section{Curves, the Cartier Operator, and Christol's theorem}\label{Curves}

\subsection{Curves and the Cartier Operator} At this point we recall some standard definitions and terminology from the algebraic geometry of curves. For an introduction to the subject, see \cite[Chapter IV]{Hartshorne}, \cite[Chapter II]{SilvermanEllipticCurves}, or \cite{Stichtenoth}.

Let $k$ be a perfect field of characteristic $p$ and let $X/k$ be a smooth projective algebraic curve. Denote the function field $k(X)$ by $K$. Let $\Omega=\Omega_{K/k}$ be the $K$-vector space of (K\"ahler) differentials of $K/k$, which is one-dimensional.

Let $P$ be a (closed) point of $X$, or equivalently a place of $K$. (Whenever we refer to points of $X$, we will always mean closed points.) Write $v_P(f)$ or $v_P(\omega)$ for the valuation given by the order of vanishing of $f\in K^\times$ or $\omega\in\Omega\setminus\{0\}$ at $P$. The valuation ring $\O_P$ is defined to be
$$\O_P=\{f\in K^\times : v_P(f)\geq 0\}\cup\{0\}$$
with maximal ideal
$$\frak{m}_P=\{f\in K^\times : v_P(f)\geq 1\}\cup\{0\}.$$
The degree $\deg(P)$ is $\dim_k\O_P /\frak{m}_P$. Write $\res_P(\omega)$ for the residue of $\omega$ at $P$. 

Let $\Div_k(X)$ denote the group of $k$-rational divisors of $X$. If $f\in K^\times$, define 
\begin{align*}
(f)_0 & =\sum_{v_P(f)>0}v_P(f) P,\\
(f)_\infty & =\sum_{v_P(f)<0} -v_P(f) P,\text{ and}\\
(f) & = (f)_0 - (f)_\infty.
\end{align*}
For $D=\sum_{P} n_P P\in\Div_k(X)$, write $D\geq 0$ if $D$ is effective, that is, if $n_P \geq 0$ for all $P$. Define
\begin{equation*}
\mathcal{L}(D)=\{f\in K^\times:(f)+D\geq 0\}\cup\{0\}
\end{equation*}
and
\begin{equation*}
\Omega(D)=\{\omega\in\Omega\setminus\{0\}:(\omega)+D\geq 0\}\cup\{0\}.
\end{equation*}
By the Riemann-Roch theorem, 
\begin{equation*}
\dim_k \Omega(D)=\dim_k\mathcal{L}(-D)+\deg(D)+g-1.
\end{equation*} 
If $D$ is effective, then $\mathcal{L}(-D)=\{0\}$ and 
\begin{equation*}\label{eqn: differential Riemann-Roch}
\dim_k \Omega(D)=\deg(D)+g-1.
\end{equation*}
For an effective divisor $D$, it will be convenient to introduce the nonstandard notation $\sqrt{D}$ for the ``radical" of $D$, that is,
\begin{equation*}
\sqrt{D} = \sum_{v_P(D)>0} P.
\end{equation*}

Let $x\in K$ be a separating variable ($x\notin K^p$, equivalently $dx\neq 0$). For such an $x$, there is some point $P$ of $X$ such that $v_P(x)$ is not divisible by $p$. By an easy argument using valuations at $P$, the powers $1,x,x^2,\dots,x^{p-1}$ are linearly independent over $K^p$. As $[K:K^p]=p$ by standard facts about purely inseparable extensions \cite[Prop 3.10.2]{Stichtenoth}, the set $\{1,x,\dots,x^{p-1}\}$ forms a basis of $K$ over $K^p$. Thus, any $\omega\in\Omega$ can be written as
\begin{equation}
\omega = \left(u_0^p + u_1^px + \dots + u_{p-1}^p x^{p-1}\right)\,dx
\end{equation}
for unique $u_0,\dots,u_{p-1}\in K$. Define a map $\mathcal{C}:\Omega\to\Omega$ by \begin{equation}\mathcal{C}(\omega)=u_{p-1}\,dx. \end{equation} It is true, but far from obvious, that $\mathcal{C}$ does not depend on the choice of $x$ \cite[p.\ 183]{Stichtenoth}. The operator $\mathcal{C}$ is an $\F_p$-linear endomorphism of $\Omega$ known as the Cartier operator. This operator is of great importance in characteristic-$p$ algebraic geometry. It can be extended in a natural way to $r$-forms of higher-dimensional varieties for any $r$, though we do not need this for our purposes (see for example \cite{Cartier} and \cite{Serre}).

It follows from the definition of $\mathcal{C}$ that for any $\omega\in\Omega$,
\begin{equation}\label{eqn: differential decimation}
\omega=\sum_{i=0}^{p-1} x^i \left(\frac{\mathcal{C}(x^{p-1-i}\omega)}{dx}\right)^p\,dx.
\end{equation}
Comparing equations \ref{eqn: power series decimation} and \ref{eqn: differential decimation} motivates the following definition. For $i\in\{0,1,\dots,p-1\}$, define the \emph{twisted Cartier operator} $\sigma_i:\Omega\to\Omega$ by 
\begin{equation}
\sigma_i(\omega)=\C(x^{p-1-i}\omega).
\end{equation}
 For this to make sense in an arbitrary function field $K$, we need to fix a distinguished separating $x\in K$ in advance (equivalently, a distinguished separable cover $X\to\P^1$). Having done so, if $y\in k((x))\cap K$, it is clear that
\begin{equation}\label{eqn: Cartier and Cartier}
\sigma_i(y\,dx) = \Lambda_i(y)\,dx,
\end{equation}
so the $\sigma_i$ act on differentials just as the $\Lambda_i$ act on series.

\begin{rem}
Equation \ref{eqn: Cartier and Cartier} is true in the differential module of any function field $K$ that contains the Laurent series $y$, as long as $x\in K$ is separating. In particular, we can take $K=k(x)[y]$, as the Laurent series field $k((x))$ is a separable extension of $k(x)$. This proves that if $y$ is algebraic, then the operators $\Lambda_i$ map $k(x)[y]$ into itself. This is not at all obvious from the definition of $\Lambda_i$ as an operator on formal Laurent series.
\end{rem}

We summarize some important properties of $\C$ in the next proposition. These are standard (see e.g. \cite[p.\ 182]{Stichtenoth}), but we sketch proofs for the convenience of the reader.
 
\begin{prop}\label{prop: Cartier properties}
For any $\omega,\omega'\in\Omega$, $f\in K$, and any point $P$ of $X$:
\begin{enumerate}
\item $\mathcal{C}(\omega+\omega')=\C(\omega)+\mathcal{C}(\omega')$.
\item $\C(f^p\omega)=f\C(\omega)$.
\item $\C(\omega)=0$ if and only if $\omega=dg$ for some $g\in K$.
\item If $\omega$ is regular at $P$, then so is $\C(\omega)$.
\item If $\omega$ has a pole at $P$, then $v_P(\C(\omega))\geq \frac{v_P(\omega)+1}{p}-1$, and equality holds if the RHS is an integer. In particular, if $v_P(\omega)=-1$, then $v_P(\C(\omega))=-1$.
\item If $\deg(P)=1$, then $\res_P(\C(\omega))^p=\res_P(\omega)$.

\end{enumerate}
\end{prop}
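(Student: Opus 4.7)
The plan is to fix notation once and for all: pick a local uniformizer $t$ at $P$, which is automatically separating since $v_P(t)=1\not\equiv 0 \pmod p$, and exploit the cited independence of $\mathcal{C}$ from the choice of separating variable to compute $\mathcal{C}$ locally using $t$. Then every $\omega\in\Omega$ has a unique expansion $\omega=(u_0^p+u_1^p t+\dots+u_{p-1}^p t^{p-1})\,dt$ with $u_i\in K$, and $\mathcal{C}(\omega)=u_{p-1}\,dt$. From this description parts (1) and (2) are immediate: in characteristic $p$ the Frobenius is additive so the coefficients add in the basis $\{1,t,\dots,t^{p-1}\}$, and multiplying $\omega$ by $f^p$ replaces each $u_i$ by $fu_i$, pulling $f$ through.

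For (3), expand $\omega=\sum_{i=0}^{p-1} u_i^p x^i\,dx$ in the separating variable $x$. For $0\leq i\leq p-2$, $x^i\,dx=d(x^{i+1}/(i+1))$ and $d(u_i^p)=0$, so if $u_{p-1}=0$ then $\omega=dg$ with $g=\sum_{i=0}^{p-2}u_i^p x^{i+1}/(i+1)$. Conversely, for any $h\in K$ decompose $h=\sum_{i=0}^{p-1} h_i^p x^i$ using the $K^p$-basis; differentiating and applying $d(h_i^p)=0$ yields $dh=\sum_{j=0}^{p-2}(j+1)h_{j+1}^p x^j\,dx$, which has zero coefficient on $x^{p-1}\,dx$, so $\mathcal{C}(dh)=0$.

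For (4) and (5), the key observation is that the valuations $v_P(u_i^p t^i)=p\,v_P(u_i)+i$ for $i=0,\dots,p-1$ have pairwise distinct residues modulo $p$, so by the ultrametric inequality there is no cancellation and $v_P(\omega)=\min_i\bigl(p\,v_P(u_i)+i\bigr)$. Taking $i=p-1$ in particular gives $v_P(u_{p-1})\geq (v_P(\omega)-(p-1))/p=(v_P(\omega)+1)/p-1$, and this is precisely $v_P(\mathcal{C}(\omega))$ since $v_P(dt)=0$. Because $v_P(u_{p-1})$ is an integer, equality forces $v_P(\omega)\equiv p-1\pmod p$, i.e.\ the bound $(v_P(\omega)+1)/p-1$ is itself an integer; this is exactly the equality criterion in (5), and the case $v_P(\omega)=-1$ is the special instance. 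Part (4) is the case $v_P(\omega)\geq 0$: each constraint $p\,v_P(u_i)+i\geq 0$ forces $v_P(u_i)\geq 0$, so $\mathcal{C}(\omega)$ is regular.

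For (6), pass to the completion $\hat K_P$. Because $\deg(P)=1$ and $k$ is perfect, $\hat K_P\cong k((t))$; writing $\omega=f(t)\,dt$ with $f=\sum_n a_n t^n\in k((t))$, perfection lets me group by residue mod $p$ as $f=\sum_{i=0}^{p-1} t^i g_i(t)^p$ with $g_i(t)=\sum_m a_{pm+i}^{1/p}\,t^m$. Then $\mathcal{C}(\omega)=g_{p-1}(t)\,dt$, and its residue is the coefficient of $t^{-1}$ in $g_{p-1}$, namely $a_{-1}^{1/p}$; raising to the $p$-th power gives $a_{-1}=\res_P(\omega)$. The main obstacle throughout is legitimizing the switch from the global definition of $\mathcal{C}$ (which a priori depends on some fixed separating $x\in K$) to the purely local computation in the uniformizer $t$; everything rests on the cited invariance of $\mathcal{C}$ under change of separating variable, after which each part reduces to a short algebraic manipulation in the $K^p$-basis $\{1,t,\dots,t^{p-1}\}$ or in the Laurent expansion at $P$.
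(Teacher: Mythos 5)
Your proof is correct and takes essentially the same route as the paper: invoke the cited independence of $\mathcal{C}$ from the choice of separating element, expand differentials in the $K^p$-basis $\{1,t,\dots,t^{p-1}\}$ associated to a local uniformizer $t$ at $P$, and reduce everything to valuations and Laurent coefficients. Your ultrametric observation that the valuations $p\,v_P(u_i)+i$ lie in distinct residue classes mod $p$ (so that $v_P(\omega)=\min_i(p\,v_P(u_i)+i)$ with no cancellation) gives a clean, unified treatment of (4) and (5); the paper handles (4) via $\Lambda_{p-1}$ applied to the local power series and leaves (5), (6) as ``similar.'' One tiny imprecision: for the equality criterion in (5) you argue ``equality $\Rightarrow$ RHS integer,'' while the statement asserts ``RHS integer $\Rightarrow$ equality.'' The intended direction follows immediately from the same observation, though: if $(v_P(\omega)+1)/p-1\in\Z$, then $v_P(\omega)\equiv p-1\pmod p$, so the unique index achieving the minimum is $i=p-1$ (in particular $u_{p-1}\neq 0$), forcing $v_P(u_{p-1})=(v_P(\omega)+1)/p-1$. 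It would be worth making that half of the equivalence explicit.
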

\begin{proof}

Statements (1) and (2) are immediate from the definition and imply that $\C$ is $\F_p$-linear.

Statement (3) follows from the fact that there is no $g\in K$ such that $dg = x^{p-1}\,dx$. If there were such a $g$, we would have $\frac{dg}{dx}=x^{p-1}$, but this is impossible because the derivative of $x^p$ is zero. For the converse, if $u_{p-1}=0$, set
\begin{equation*}
g=u_0^px +u_1^p\frac{x^2}{2}+\dots+u_{p-2}^p\frac{x^{p-1}}{p-1},
\end{equation*}
and note that no denominator is zero. Then $\omega=dg$.

For statement (4), choose a uniformizer $t$ at $P$, which is necessarily separating (if $t$ is a $p$th power, then $v_P(t)$ is a multiple of $p$ and $t$ cannot be a uniformizer at $P$). Let $\omega=f\,dt$. If $\omega$ is regular at $P$, then so is $f$, because $v_P(dt)=0$. So $f$ can be written as a power series in $t$ and $\C(\omega)=\Lambda_{p-1}(f)\,dt$. The series $\Lambda_{p-1}(f)$ is regular at $P$, so $\C(\omega)$ is also. Statements (5) and (6) follow similarly by writing $f$ as a Laurent series in $t$.

\end{proof}

\subsection{Christol's Theorem and Complexity Bounds}

At this point we fix a prime $p$ and a prime power $q=p^r$. Let $y=\sum_{n=0}^\infty {\bf a}(n)x^n\in \F_q[[x]]$ be algebraic of degree $d$, height $h$, and genus $g$. Let $X$ be the normalization of the projective closure of the affine curve defined by the minimal polynomial of $y$ (after clearing denominators). Set $K=\F_q(X)$ and $\Omega=\Omega_{K/\F_q}$. 

Define $\S_q$ to be the monoid generated by all $r$-fold compositions of the operators $\{\sigma_0,\dots,\sigma_{p-1}\}$. In particular, $\S_p=\langle\sigma_0,\dots,\sigma_{p-1}\rangle$. We write $\S_q(\omega)$ for the orbit of $\omega$ under $\S_q$. Note that $dx\neq 0$ because $\F_q(x)\subseteq K\subseteq \F_q((x))$, so $K/\F_q(x)$ is separable (the Laurent series field is a separable extension of the rational function field). Therefore Equation \ref{eqn: Cartier and Cartier} holds, so the orbits $\S_q(y\,dx)$ and $S_q(y)$ are in bijection. So if $\S_q(y\,dx)$ is finite, then ${\bf a}$ is $p$-automatic, and $|\S_q(y\,dx)|=N_q({\bf a})$.

We now present the proof of the ``algebraic implies automatic" direction of Christol's theorem due to David Speyer \cite{Speyer}. As indicated above, the crux of the argument is to show that the orbit $\S_q(y\,dx)$ is finite. By Proposition \ref{p vs q}, it loses essentially nothing to replace $\S_q(y\,dx)$ with the larger orbit $\S_p(y\,dx)$. 

\begin{prop}[Speyer]\label{new Christol proof}
The sequence ${\bf a}$ is $q$-automatic.
\end{prop}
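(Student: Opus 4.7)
The plan is to exhibit a finite-dimensional $\F_q$-subspace $W\subseteq\Omega$ that contains $y\,dx$ and is invariant under each of the twisted Cartier operators $\sigma_0,\dots,\sigma_{p-1}$. Since $W$ is a finite-dimensional vector space over the finite field $\F_q$, it is a finite set, so the orbit $\S_p(y\,dx)\subseteq W$ is finite; a fortiori the smaller orbit $\S_q(y\,dx)$ is finite. By Equation~\ref{eqn: Cartier and Cartier} this orbit is in bijection with $S_q(y)$, the $q$-kernel of $\mathbf{a}$, so Eilenberg's theorem (Theorem~\ref{Eilenberg}) yields that $\mathbf{a}$ is $q$-automatic.

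The natural candidates for $W$ are the Riemann-Roch spaces $\Omega(D)$ attached to effective divisors $D$ on $X$: for such $D$ one has $\dim_{\F_q}\Omega(D)=\deg D+g-1$, so $|\Omega(D)|=q^{\deg D+g-1}<\infty$. I would take $D$ to have two ingredients: an effective divisor dominating the pole divisor of $y\,dx$ (which guarantees $y\,dx\in\Omega(D)$), plus additional multiplicity at each place of $X$ above $x=\infty$ to absorb the poles introduced when multiplying by $x^{p-1-i}$ before applying $\C$.

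The core verification is that $\sigma_i(\Omega(D))\subseteq\Omega(D)$, and this is done place by place using Proposition~\ref{prop: Cartier properties}. At an affine place ($v_P(x)\geq 0$), multiplication by $x^{p-1-i}$ creates no new pole, and parts (4)--(5) of the proposition show that $\C$ only shrinks pole orders (by a factor of roughly $p$), so the desired containment is automatic. At a place $P$ above $\infty$ with $v_P(x)=-m_P$, write $d_P=v_P(D)$; then $x^{p-1-i}\omega$ has pole at $P$ of order at most $d_P+(p-1)m_P$, and part~(5) bounds the pole order of $\C(x^{p-1-i}\omega)$ by roughly $(d_P+(p-1)m_P+p-1)/p$. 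A short calculation shows that requiring $v_P(\sigma_i(\omega))\geq -d_P$ uniformly in $i$ reduces to the condition $d_P\geq m_P+1$. Thus any effective $D$ dominating the pole divisor of $y\,dx$ and satisfying $v_P(D)\geq m_P+1$ at every place over $\infty$ will do. The only real obstacle is this pole-order bookkeeping at infinity; it is mechanical but must be done carefully. Note that to get the sharp bound of Theorem~\ref{thm: main} one must choose $D$ much more delicately and work with $\S_q$ directly rather than pass to the larger $\S_p$, but for the qualitative finiteness statement here any such $D$ suffices.
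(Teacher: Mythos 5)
Your proposal is correct and follows essentially the same route as the paper: the heart of the matter in both is the pole-order inequality from Proposition~\ref{prop: Cartier properties}(5) applied at the finitely many poles of $y\,dx$ and of $x$. The only presentational difference is that you package the bookkeeping as the $\sigma_i$-invariance of an explicit Riemann--Roch space $\Omega(D)$ (your calculation $d_P\geq m_P+1$ at places over $\infty$ is right), whereas the paper iterates the valuation inequality directly on the orbit $\S_p(y\,dx)$ to show every element has $v_P\geq n+m$; the invariant-subspace viewpoint you adopt is precisely how the paper itself proceeds in the subsequent quantitative refinements (Corollary~\ref{cor: easy upper bound} and Lemma~\ref{lem: fast behavior}).
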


\begin{proof}

Let $P$ be a point of $X$. By Proposition \ref{prop: Cartier properties}, if neither $x$ nor $\omega$ has a pole at $P$, then $\sigma_i(\omega)=\C(x^{p-i-1}\omega)$ does not have a pole at $P$ for any $i\in\{0,\dots,p-1\}$. Therefore, the only places where elements of $\S_p(y\,dx)$ can have poles are the finitely many poles of $y\,dx$ and of $x$.

Now assume that $P$ is a pole of $y\,dx$ or of $x$. Let $n=v_P(y\,dx)$ and $m=v_P(x)$. Applying the inequality of Proposition ~\ref{prop: Cartier properties} gives
\begin{equation*}
v_P(\sigma_i(y\,dx))\geq\frac{n+m(p-1-i)+1}{p}-1.
\end{equation*}
The pole of largest order that $\sigma_i(y\,dx)$ could have at $P$ occurs when both $n$ and $m$ are negative. In this case,
\begin{equation*}
v_P(\sigma_i(y\,dx))\geq\frac{n}{p}+\frac{m(p-1)}{p}+\frac{1}{p}-1.
\end{equation*}
Applying the same reasoning again shows that
\begin{equation*}
v_P(\sigma_j\sigma_i(y\,dx))\geq \frac{n}{p^2} + \frac{m(p-1)}{p^2}+ \frac{m(p-1)}{p}+\frac{1}{p^2}-1,
\end{equation*} 
and applying it $k$ times shows that 
\begin{align*}
v_P(\sigma_{i_k}\dots\sigma_{i_1}(y\,dx)) & \geq \frac{n}{p^k}+m(p-1)\left(\frac{1}{p^k}+\frac{1}{p^{k-1}}+\dots+\frac{1}{p}\right)+\frac{1}{p^k}-1\\
& \geq n + \frac{m(p-1)}{p-1}+\frac{1}{p^k}-1\\
& >n+m-1.
\end{align*}
Therefore $v_P(\omega)\geq n+m$ for any $\omega\in \S_p(y\,dx)$. (If one of $\{n,m\}$ is positive, then it follows in the same way that $v_P(\omega)\geq\min\{n,m\}$ instead.)

The differentials in $\S_p(y\,dx)$ have poles at only finitely many places, and the orders of these poles are bounded. So there is a finite-dimensional $\F_q$-vector space that contains $\S_p(y\,dx)$, and in particular $\S_p(y\,dx)$ is finite. 
\end{proof}

The Riemann-Roch bound implicit in Proposition \ref{new Christol proof} gives a complexity bound that is a preliminary version of Theorem \ref{thm: main}. This is Corollary \ref{cor: easy upper bound}, for which it will be convenient to use the language of representations. Let $v=y\,dx$, and let $V$ and $\lambda$ be as in the setup before Proposition \ref{new Christol proof}. Let $\phi:\Sigma_q^*\to\End(V)$ be the unique monoid morphism defined for $c\in\Sigma_q$ by $$\phi(c)=\sigma_{i_r}\sigma_{i_{r-1}}\dots\sigma_{i_1}$$ where $c=i_r p^{r-1}+i_{r-1}p^{r-2}+\dots+i_2p+i_1$ with $0\leq i_1,\dots, i_r\leq p-1$. Then by the power series machinery of Section 2.3 and the bijection between $S_q(y)$ and $\S_q(y\, dx)$, we see that $(V,v,\phi,\lambda)$ gives a $q$-representation of ${\bf a}$.

\begin{rem}
If $D$ is any divisor such that $\S_q(y\,dx)\subseteq \Omega(D)$, then we can identify $\lambda$ with an element of $H^1(X,\O_X(-D))$. This is because of the natural duality isomorphism
$$H^1(X,\O_X(-D))^*\simeq H^0(X,\Omega^1_X(D)),$$ which is the classical statement of Serre duality for curves \cite[Chapter III.7]{Hartshorne}.
(The global sections of $\Omega^1_X(D)$ are exactly what we have called $\Omega(D)$.) In fact, $\lambda$ has an explicit realization as a repartition (adele). See e.g. \cite[Chapter 1.5]{Stichtenoth} or \cite[p.\ 37]{Serre}. Moreover, the Cartier operator on $\Omega(D)$ is the transpose (or adjoint) of the Frobenius operator on $H^1(X,\O_X(-D))$.
\end{rem}

\begin{cor}\label{cor: easy upper bound}
$\max(N_q^f({\bf a}),N_q({\bf a}))\leq q^{h+3d+g-1}$
\end{cor}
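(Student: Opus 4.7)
The plan is to produce an effective divisor $D$ on $X$ such that $\S_p(y\,dx)\subseteq \Omega(D)$ and then bound $|\Omega(D)|$ by Riemann--Roch. Since $\S_q\subseteq\S_p$, this will give $N_q(\mathbf{a})=|\S_q(y\,dx)|\leq|\Omega(D)|$. Taking $V$ to be the $\F_q$-linear span of $\S_p(y\,dx)$ inside $\Omega(D)$, the tuple $(V,y\,dx,\phi,\lambda)$ (with $\lambda$ the coefficient-at-constant-term functional and $\phi$ as defined just before the corollary) is a $q$-representation of $\mathbf{a}$, so Proposition~\ref{dimension bound} will simultaneously give $N_q^f(\mathbf{a})\leq q^{\dim_{\F_q}V}\leq q^{\dim_{\F_q}\Omega(D)}$. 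The natural candidate divisor is
\[ D = (y)_\infty + (x)_\infty + (dx)_\infty. \]

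To verify $\S_p(y\,dx)\subseteq \Omega(D)$, I would revisit the local pole analysis from the proof of Proposition~\ref{new Christol proof}. That argument gives, at any place $P$ and any $\omega\in\S_p(y\,dx)$, the bound $v_P(\omega)\geq v_P(y\,dx)+v_P(x)$ when both of these are negative, $v_P(\omega)\geq\min\{v_P(y\,dx),v_P(x)\}$ when exactly one is positive, and $v_P(\omega)\geq 0$ when neither has a pole at $P$. A short case analysis, using $v_P(dx)\geq 0$ wherever $v_P(x)\geq 0$, shows that $v_P(D)\geq -v_P(\omega)$ at every $P$, i.e., $\omega\in\Omega(D)$.

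The remaining task is to compute $\deg D$. We have $\deg(x)_\infty=d$ since $x:X\to\P^1$ has degree $d$, and $\deg(y)_\infty=h$ by the same reasoning applied to $y:X\to\P^1$ (as in the corollary following Theorem~\ref{thm: main}). The substantive estimate is $\deg(dx)_\infty\leq 2d$. This follows from the identity $(dx)=-2(x)_\infty+R_{\pi_x}$, where $R_{\pi_x}\geq 0$ is the ramification divisor of $\pi_x:X\to\P^1$; comparing componentwise, the pole part satisfies $(dx)_\infty\leq 2(x)_\infty$, so $\deg(dx)_\infty\leq 2d$. I expect this step to be the main obstacle, since a naive local calculation of $v_P(dx)$ at a place above $\infty$ requires separate treatment in the tame and wild cases, whereas the Riemann--Hurwitz identity handles both uniformly.

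Summing, $\deg D\leq h+2d+d = h+3d$. Applying Riemann--Roch to the effective divisor $D$ (of positive degree, since $d\geq 1$) gives $\dim_{\F_q}\Omega(D)=\deg D+g-1\leq h+3d+g-1$, hence $|\Omega(D)|\leq q^{h+3d+g-1}$. This bounds both $N_q(\mathbf{a})$ and $N_q^f(\mathbf{a})$ as described, proving the corollary.
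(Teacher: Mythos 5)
Your proof is correct and follows essentially the same route as the paper: bound the orbit $\S_p(y\,dx)$ inside $\Omega(D)$ for a suitable effective divisor $D$, apply Riemann--Roch to estimate $\dim_{\F_q}\Omega(D)$, and then invoke Proposition~\ref{dimension bound} to bound both $N_q({\bf a})$ and $N_q^f({\bf a})$ by $q^{\dim V}$. Two small points of divergence from the paper are worth noting, neither of which affects correctness. First, the paper works with the slightly smaller divisor $(y\,dx)_\infty+(x)_\infty$ rather than your $D=(y)_\infty+(x)_\infty+(dx)_\infty$; since $(y\,dx)_\infty\leq (y)_\infty+(dx)_\infty$, your $\Omega(D)$ contains the paper's space, so the containment $\S_p(y\,dx)\subseteq\Omega(D)$ holds \emph{a fortiori}, and the degree estimates coincide at $h+3d$. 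Second, for the bound $\deg(dx)_\infty\leq 2d$ you invoke Riemann--Hurwitz, $(dx)=-2(x)_\infty+R_{\pi_x}$ with $R_{\pi_x}\geq 0$; the paper instead argues locally that a pole of $dx$ at $P$ has order at most one more than the pole of $x$ at $P$ (so $(dx)_\infty\leq (x)_\infty+\sqrt{(x)_\infty}\leq 2(x)_\infty$). The local argument is actually not as delicate as you anticipate---in the wild case the leading term vanishes and the pole order can only improve---but your Riemann--Hurwitz route is cleaner and handles tame and wild ramification in one stroke, which is a genuine small improvement in exposition.
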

\begin{proof}
The bounds on the orders of poles in the proof of Proposition \ref{new Christol proof} show that $$\S_q(y\,dx)\subseteq \Omega((y\,dx)_\infty+(x)_\infty),$$ so we may take $V=\Omega((y\,dx)_\infty+(x)_\infty)$ in the $q$-representation of ${\bf a}$. By Proposition \ref{dimension bound}, both $N_q^f({\bf a})$ and $N_q({\bf a})$ are at most $|V|=q^{\dim_{\F_q} V}.$ We have\begin{equation*}
\dim_{\F_q} V =\deg((y\,dx)_\infty+(x)_\infty)+g-1\leq \deg((y\,dx)_\infty)+\deg((x)_\infty)+g-1.
\end{equation*}
Let $\pi_x,\pi_y:X\to \P^1$ be the projection maps from $X$ onto the $x$- and $y$-coordinates. We have $\deg((x)_\infty)=\deg(\pi_x)=d$ and $\deg((y)_\infty)=\deg(\pi_y)=h$. (The easiest way to see this is by looking at the function field inclusions $\pi_x^*:\F_q(x)\hookrightarrow K$ and $\pi_y^*:\F_q(y)\hookrightarrow K$. That is, $d=[K:\F_q(x)]$ and $h=[K:\F_q(y)]$.)

The poles of $dx$ occur at points which are poles of $x$, and the order of a pole of $dx$ at $P$ can be at most one more than the order of the pole of $x$ at $P$. So $\deg((dx)_\infty)$ is maximized when the poles of $x$ are all simple, in which case $\deg((dx)_\infty)= 2\deg((x)_\infty)=2d$. The fact that $\deg((y\,dx)_\infty)\leq \deg((y)_\infty)+\deg((dx)_\infty)$ gives the upper bound.
\end{proof}
The bound in Corollary \ref{cor: easy upper bound} is superseded by Theorem \ref{thm: main} for large values of $h$, $d$, and $g$. However, it is simple to prove and is already much better than the previous bounds derived from Ore's Lemma.

We aim to prove Theorem \ref{thm: main} by bounding the size of the orbit $\S_q(y\,dx)$. As in the proof of Proposition \ref{new Christol proof}, it will be easier to deal with the larger orbit $\S_p(y\,dx)$. By Proposition \ref{p vs q}, this creates no essential difference in the size of the orbit. To streamline the exposition, we establish some preliminary lemmas. Lemmas \ref{lem: fast behavior} and \ref{prop: eventual behavior} determine the ``eventual behavior" of $y\,dx$ under $\S_p$. The main difficulty is in handling the orbit of $y\, dx$ under the operator $\sigma_0$ (this is related to the special role that $0$ plays in non-uniqueness of base expansions). Recall that $\sqrt{D}$ is the sum of the points in the support of the divisor $D$, neglecting multiplicities.

\begin{lem}\label{lem: fast behavior}
Let $V=\Omega((y)_\infty+(x)_\infty+\sqrt{(x)_\infty})$ and $W=\Omega((y)_\infty+(x)_\infty)$. 
Then for any $i\in\{1,\dots,p-1\}$, $\sigma_i(V)\subseteq W$, and for any $i\in\{0,\dots,p-1\}$, $\sigma_i(W)\subseteq W$.
\end{lem}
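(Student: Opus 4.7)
My plan is to verify both containments place-by-place. First I would set up bookkeeping: for a closed point $P$ of $X$, write $n_P = \max(0, -v_P(y))$, $m_P = \max(0, -v_P(x))$, and let $\epsilon_P \in \{0,1\}$ be the coefficient of $P$ in $\sqrt{(x)_\infty}$ (so $\epsilon_P = 1$ iff $m_P \geq 1$). Then $\omega \in V$ unwinds to $v_P(\omega) \geq -(n_P + m_P + \epsilon_P)$ at every $P$, while $\omega \in W$ unwinds to $v_P(\omega) \geq -(n_P + m_P)$. At a place with $n_P = m_P = 0$, both $\omega$ and $x^{p-1-i}\omega$ are regular, so Proposition \ref{prop: Cartier properties}(4) gives that $\sigma_i(\omega) = \C(x^{p-1-i}\omega)$ is regular at $P$; hence only the finitely many poles of $x$ or $y$ need attention.

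Next, at such a pole, I would write $a = v_P(\omega)$, so that $v_P(x^{p-1-i}\omega) = a - (p-1-i)m_P$, and apply Proposition \ref{prop: Cartier properties}(5). Since valuations take integer values, that bound upgrades to
\[ v_P(\sigma_i(\omega)) \geq \left\lceil \frac{a - (p-1-i)m_P + 1}{p} \right\rceil - 1. \]
A short rearrangement (using $\lceil X \rceil \geq Y$ iff $X > Y-1$ for integer $Y$) shows the right-hand side is $\geq -(n_P + m_P)$ exactly when $a \geq -p n_P - (i+1) m_P$. So the lemma reduces to checking this single inequality in the worst case allowed by each hypothesis.

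Finally I would dispatch the two cases. For $\sigma_i(W) \subseteq W$ with any $i \in \{0,\dots,p-1\}$: the extremal $a$ is $-n_P - m_P$, and the inequality collapses to $(p-1) n_P + i m_P \geq 0$, which is automatic. For $\sigma_i(V) \subseteq W$ with $i \in \{1,\dots,p-1\}$: the extremal $a$ is $-n_P - m_P - \epsilon_P$, and the inequality collapses to $(p-1) n_P + i m_P \geq \epsilon_P$; this is automatic when $\epsilon_P = 0$, and when $\epsilon_P = 1$ (so $m_P \geq 1$) it follows from $i m_P \geq 1$.

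The main technical point is keeping the ceiling intact in the Cartier bound rather than dropping to the weaker form $\frac{v_P(\cdot)+1}{p}-1$: the weaker form would lose exactly the unit of slack that makes the $V$-to-$W$ case work when $i \geq 1$. This also clarifies the asymmetry in the statement. When $i = 0$ the factor $x^{p-1-i} = x^{p-1}$ is maximal, so the division-by-$p$ contraction of $\C$ exactly cancels it at a pole of $x$, with no room to spare for the extra pole allowed in $\sqrt{(x)_\infty}$; when $i \geq 1$, the slack $i m_P \geq 1$ absorbs that extra pole.
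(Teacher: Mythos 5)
Your proof is correct and follows the same strategy as the paper: bound $v_P(\sigma_i(\omega))$ place by place using the Cartier valuation estimate in Proposition~\ref{prop: Cartier properties}, then verify the bound is good enough at the finitely many poles of $x$ and $y$. You are right, though, that keeping the ceiling is the load-bearing refinement. The paper's printed chain of inequalities drops integrality, and its final displayed step, $\frac{-v_P((y)_\infty)+1}{p}-v_P((x)_\infty)-1 \geq -v_P((y)_\infty)-v_P((x)_\infty)$, only holds when $v_P((y)_\infty)\geq 1$; at a pole of $x$ that is not a pole of $y$ it reduces to $\frac1p\geq 1$, which is false. Your ceiling step (equivalently, an appeal to the integrality/equality clause in Proposition~\ref{prop: Cartier properties}(5)) supplies exactly the missing unit in that case, and your clean reduction to the single inequality $(p-1)n_P+i\,m_P\geq\epsilon_P$ transparently isolates why $i\geq 1$ is needed for $\sigma_i(V)\subseteq W$ but not for $\sigma_i(W)\subseteq W$. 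So this is the paper's argument, carried out more carefully.
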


\begin{proof}
Let $\omega\in V$. Then for any point $P$, $v_P(\omega)\geq -v_P((y)_\infty)-2v_P((x)_\infty)$. By Proposition \ref{prop: Cartier properties}, 
\begin{align*}
v_P(\sigma_i(y\,dx))=v_P(\C(x^{p-1-i}y\,dx)) & \geq \frac{-v_P((y)_\infty)+(-p-1+i)v_P((x)_\infty)+1}{p} -1\\
& \geq\frac{-v_P((y)_\infty)+1}{p}-\frac{pv_P((x)_\infty)}{p}-1\\
& \geq -v_P((y)_\infty)-v_P((x)_\infty),
\end{align*}
where we have used that $i\geq 1$. A similar calculation shows that $\sigma_i(W)\subseteq W$ for any $i$.
\end{proof}

\begin{lem}\label{prop: eventual behavior}
Let $T$ be the maximum order of any pole of $y$ or zero of $x$. Then 
$$\sigma_0^\ell(y\,dx)\in\Omega(\sqrt{(y)_\infty}-(x_0)+\sqrt{(x)_0}+ (x)_\infty+\sqrt{(x)_\infty})$$ for $\ell\geq \lceil\log_p(T)\rceil$.
\end{lem}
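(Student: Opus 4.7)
The plan is to bound $v_P(\sigma_0^\ell(y\,dx))$ from below at every closed point $P$ of $X$ and show that this lower bound matches the coefficient of $P$ in the divisor $D := \sqrt{(y)_\infty} - (x)_0 + \sqrt{(x)_0} + (x)_\infty + \sqrt{(x)_\infty}$ as soon as $\ell \geq \lceil \log_p T\rceil$.

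The first step is a one-step estimate: for every $\omega \in \Omega$ and every closed point $P$ with $m := v_P(x)$,
\[
v_P(\sigma_0(\omega)) = v_P(\C(x^{p-1}\omega)) \geq \frac{v_P(\omega) + (p-1)m + 1}{p} - 1.
\]
This follows from applying Proposition~\ref{prop: Cartier properties}(5) to $x^{p-1}\omega$: the Laurent-expansion argument that proves (5) actually yields the displayed inequality whether or not $x^{p-1}\omega$ has a pole at $P$, and part (4) handles the regular case separately. Rewriting the recurrence as $v_\ell \geq v_{\ell-1}/p + (p-1)(m-1)/p$ and summing the resulting geometric series gives the closed-form iterate
\[
v_P(\sigma_0^\ell(y\,dx)) \geq (m-1) + \frac{v_P(y\,dx) - (m-1)}{p^\ell}.
\]
Because $v_P(dx) \geq m-1$ uniformly (write $x$ in terms of a uniformizer at $P$; equality holds in the tame case, and wild ramification only strengthens the bound), one has $v_P(y\,dx) - (m-1) \geq v_P(y)$, whence
\[
v_P(\sigma_0^\ell(y\,dx)) \geq (m-1) + \frac{v_P(y)}{p^\ell}.
\]

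The second step is a short case analysis on $(m, v_P(y))$. At a point where $m = 0$ and $v_P(y) \geq 0$ the displayed bound is not immediately strong enough, but $y\,dx$ is regular at $P$, $x^{p-1}$ is a unit at $P$, and Proposition~\ref{prop: Cartier properties}(4) shows that $\sigma_0$ preserves regularity, so $v_P(\sigma_0^\ell(y\,dx)) \geq 0 = -v_P(D)$. In every other regime — pole of $y$ with $m=0$; zero of $x$ with $m>0$, with or without a pole of $y$; pole of $x$ with $m<0$, with or without a pole of $y$ — the bound is at least $m-1$ when $y$ is regular at $P$ and at least $(m-1) - n/p^\ell$ when $y$ has a pole of order $n \leq T$ at $P$. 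Since $p^\ell \geq T \geq n$ and valuations are integers, the residual $n/p^\ell$ costs at most one unit of valuation, which precisely accounts for the extra contribution of $\sqrt{(y)_\infty}$ to $D$ at points where $y$ has a pole. Comparing the numerical bound with $-v_P(D)$ in each of the six subcases verifies $\sigma_0^\ell(y\,dx) \in \Omega(D)$.

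The main obstacle is the bookkeeping in the case analysis: the sign of $(p-1)m$ is favorable at zeros of $x$, where iteration produces strong vanishing of order $m-1$, but unfavorable at poles of $x$, where iteration produces a pole of bounded order matching $(x)_\infty + \sqrt{(x)_\infty}$. The virtue of the closed-form iterate is that one formula handles both regimes uniformly in $m$, and the threshold $\lceil \log_p T\rceil$ is calibrated exactly to make $p^\ell \geq T$ so that the residual tail term is absorbed by the integrality of valuations.
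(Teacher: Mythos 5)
Your approach is genuinely different from the paper's. The paper exploits the clean identity $\sigma_0(\omega) = \C(x^p\cdot\omega/x) = x\,\C(\omega/x)$, so that $\sigma_0^n(\omega) = x\,\C^n(\omega/x)$; it then sets $\alpha = y\,dx/x$, notes $\alpha\in\Omega((y)_\infty + (x)_0 + \sqrt{(x)_\infty})$, applies the Cartier-operator pole estimate to drive $\C^\ell(\alpha)$ into $\Omega(\sqrt{(y)_\infty}+\sqrt{(x)_0}+\sqrt{(x)_\infty})$ once $p^\ell\ge T$, and finally multiplies by $x$ to shift the divisor by $(x) = (x)_0 - (x)_\infty$. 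Conjugating by $x$ reduces the iterated $\sigma_0$-estimate to the plain Cartier operator, for which the relevant pole bound is already recorded in Proposition 3.5(5), and the same identity $\sigma_0^n = x\,\C^n(\cdot/x)$ is reused later in the proof of the main theorem. Your direct iteration, yielding the closed form $v_P(\sigma_0^\ell(y\,dx)) \ge (m-1) + (v_P(y\,dx)-(m-1))/p^\ell$ with $m = v_P(x)$, is a perfectly reasonable alternative and avoids introducing $\alpha$.

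However, there is an off-by-one gap in the case analysis at points $P$ that are poles of $y$ but neither poles nor zeros of $x$ (i.e.\ $m=0$, $v_P(y) = -n < 0$). You substitute $v_P(dx)\ge m-1$, which at $m=0$ reads $v_P(dx)\ge -1$ and discards a whole unit, since $x$ is a unit at $P$ and hence $v_P(dx)\ge 0$. Your stated bound then becomes $v_P(\sigma_0^\ell(y\,dx)) \ge -1 - n/p^\ell$, and when $n = T = p^\ell$ (which occurs whenever $T$ is a power of $p$ and is attained by such a pole of $y$) this only gives $v_P \ge -2$, whereas the target divisor $D$ has coefficient $1$ at $P$, so you need $v_P \ge -1$. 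The phrase ``the residual $n/p^\ell$ costs at most one unit of valuation'' correctly yields $\ge m-2$, but at $m=0$ the requirement is $-1$, not $-2$; the budget from $\sqrt{(y)_\infty}$ is only one, and the $m-1$ baseline is already $-1$ rather than $0$, so the ledger does not balance. The fix is immediate: at $m=0$ keep the tighter $v_P(dx)\ge 0$, so $v_P(y\,dx)-(m-1) \ge v_P(y)+1 = -n+1 > -p^\ell$, giving $v_P(\sigma_0^\ell(y\,dx)) > -2$ and hence $\ge -1$ by integrality. You should also note, as the paper does, that since $y\in\F_q[[x]]$ it is regular at every zero of $x$, so the ``zero of $x$ with a pole of $y$'' subcase you list is vacuous.
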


\begin{proof}
As in the proof of Proposition \ref{new Christol proof}, any $\omega\in \S_p(y\,dx)$ can have poles only at the poles of $y\,dx$ or of $x$. Writing locally in Laurent series expansions shows that the poles of $y\,dx$ are all either poles of $y$ or poles of $x$, and in fact $y\, dx\in \Omega((y)_\infty+(x)_\infty+\sqrt{(x)_\infty})$. For any $\omega\in\Omega$, we compute
\begin{equation*}
\sigma_0(\omega)=\C\left(\frac{x^p\omega}{x}\right)=x\C\left(\frac{\omega}{x}\right).
\end{equation*}
and therefore
\begin{equation*}
\sigma_0^n(\omega)=x\C^n\left(\frac{\omega}{x}\right).
\end{equation*}
for every $n\geq 1$.

Let $\alpha=\frac{y\, dx}{x}$. We have $\alpha \in\Omega((y)_\infty+(x)_0+\sqrt{(x)_\infty})$. As $y$ is a power series in $x$, it must be the case that $y$ is a regular function at every zero of $x$, so no point can be both a pole of $y$ and a zero of $x$. Let $P$ be a point that is either a pole of $y$ or a zero of $x$. Thus $v_P(\alpha)\geq -T$. Repeatedly applying Proposition \ref{prop: Cartier properties}, we see that $v_P(\C^\ell(\alpha))\geq -1$ for $\ell\geq\log_p(T)$. Therefore $\C^\ell(\alpha)\in \Omega(\sqrt{(y)_\infty}+\sqrt{(x)_0}+\sqrt{(x)_\infty})$. As $\sigma_0^\ell(y\, dx)=x\C^\ell(\alpha)$, we conclude
$$\sigma_0^\ell(y\, dx)\in \Omega(\sqrt{(y)_\infty}+\sqrt{(x)_0}+\sqrt{(x)_\infty}-(x)_0+(x)_\infty)$$
as claimed.
\end{proof}

The next lemma handles the repeated action of $\C$ on differentials with simple poles.

\begin{lem}\label{simple poles}
Suppose $\omega\in K$ has simple poles at points of degrees $e_1,e_2,\dots,e_n$. Let $m$ be the LCM of $e_1,\dots,e_n$. Then $\C^{rm}(\omega)-\omega$ is holomorphic (recall $q=p^r$).
\end{lem}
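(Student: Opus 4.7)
The plan is to track $\C^{rm}(\omega)$ pole-by-pole and compare it to $\omega$. Parts (4) and (5) of Proposition~\ref{prop: Cartier properties} immediately imply that every iterate $\C^k(\omega)$ has poles only at the $P_i$, and that each such pole is at worst simple. So $\C^{rm}(\omega)-\omega$ is automatically holomorphic away from the $P_i$, and at each $P_i$ is either holomorphic or has at worst a simple pole. The whole problem reduces to showing that this simple-pole part vanishes, i.e., that $\C^{rm}(\omega)$ has the same leading Laurent coefficient as $\omega$ at every $P_i$.

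The key ingredient is a local generalization of part (6) of Proposition~\ref{prop: Cartier properties} to non-rational points. At each $P_i$ I would pick a separating uniformizer $t_i$, and in the completion $\widehat{K}_{P_i}=\kappa(P_i)((t_i))$ expand $\omega=(c_i\,t_i^{-1}+\text{holomorphic})\,dt_i$ with $c_i\in\kappa(P_i)=\F_{q^{e_i}}$. Because $\kappa(P_i)$ is a perfect field, the decomposition $f=\sum_{j=0}^{p-1} u_j^p\, t_i^j$ used to define the Cartier operator still makes sense inside $\widehat{K}_{P_i}$, and reading off the $t_i^{-1}$-coefficient of $u_{p-1}$ gives $\C(\omega)=(c_i^{1/p}\,t_i^{-1}+\text{holomorphic})\,dt_i$. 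Iterating $N$ times, the leading Laurent coefficient of $\C^N(\omega)$ at $P_i$ becomes $c_i^{1/p^N}$. When $\deg(P_i)=1$ this recovers (6), and for general degree it is the natural extension.

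Now set $N=rm$. Since $c_i\in\F_{p^{re_i}}$ and $e_i\mid m$ by the defining property of $m=\mathrm{lcm}(e_1,\dots,e_n)$, the iterated Frobenius $x\mapsto x^{p^{rm}}$ is trivial on $\kappa(P_i)$; equivalently $c_i^{1/p^{rm}}=c_i$. Therefore $\C^{rm}(\omega)$ and $\omega$ have identical leading coefficients at every $P_i$, so their difference is holomorphic at each $P_i$ and, combined with the first paragraph, holomorphic everywhere on $X$.

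The main (mild) obstacle is extending part (6) to points of higher degree, since that proposition was stated only in the rational case. This reduces to checking that the local $p$-adic decomposition defining $\C$ is compatible with the perfect residue field $\kappa(P_i)$, which is essentially bookkeeping once one writes everything in terms of $t_i$. After that, the divisibility $e_i\mid m$ is exactly what is needed to make all the local Frobenius orbits trivialize simultaneously at $N=rm$, so a single choice of iteration count works for every pole at once.
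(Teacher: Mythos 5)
Your proof is correct, but it takes a genuinely different route from the paper's. The paper handles the non-rational residue fields by base-changing: it passes to $X'=X\otimes_{\F_q}\F_{q^m}$ via the morphism $\phi\colon X'\to X$, so that each pole $P_i$ splits into degree-one points $P'$, applies the rational-point statement of Proposition~\ref{prop: Cartier properties}(6) directly to compute $\res_{P'}(\C^{rm}(\phi^*\omega))=\res_{P'}(\phi^*\omega)$, and then descends using the fact that the Cartier operator commutes with pullback under constant-field extension. You instead work intrinsically on $X$ and generalize part (6) to points of arbitrary degree by computing the $p$-adic decomposition in the completion $\kappa(P_i)((t_i))$; this is legitimate because $\kappa(P_i)$ is perfect, so the decomposition $f=\sum_{j=0}^{p-1}u_j^p t_i^j$ is unique there and agrees, under the inclusion $K\hookrightarrow\widehat K_{P_i}$, with the global one used to define $\C$. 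The two approaches trade one compatibility statement for another: the paper needs $\C$ to commute with base change, while yours needs $\C$ to be computable from the Laurent expansion at a not-necessarily-rational point --- but the latter is exactly the computation the paper already makes (at rational points) in its proof of Proposition~\ref{prop: Cartier properties}(4)--(6), so your generalization is the natural one and requires no new ideas, just allowing $\kappa(P_i)$ in place of $\F_q$. Your route has the mild advantage of avoiding the geometric base-change apparatus entirely; the paper's has the advantage of invoking only the literally stated version of part (6). The divisibility $e_i\mid m$ is used identically in both: it makes $p^{rm}$-th powering (equivalently $q^m$-th powering) trivial on each residue field, so the leading Laurent coefficients return to themselves after $rm$ iterations.
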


\begin{proof}
Let $X'$ be the base change $X'=X\otimes_{\F_q}\F_{q^m}$ with base change morphism $\phi:X'\to X$. Let $K'=\phi^* K$, which is the constant field extension $\F_{q^m}K$. Each place $P$ of $K$ which is a pole of $\omega$ splits completely in the extension to $K'$ (for example, by \cite[Thm 3.6.3 g]{Stichtenoth} each place $P'$ lying over $P$ has residue field equal to $\F_{q^m}$). Therefore the pullback $\phi^*\omega$ has simple poles at places of degree 1. So $\C(\phi^*\omega)$ has simple poles at the same places as $\phi^*\omega$. At each of these places $P'$, we compute
\begin{equation*}
\res_{P'}(\C^{rm}(\phi^*\omega))=\res_{P'}(\phi^*\omega)^{(1/p)^{rm}}=\res_{P'}(\phi^*\omega)^{q^{-m}}=\res_{P'}(\phi^*\omega)
\end{equation*}
because the residue lies in $\F_{q^m}$, which is fixed under the $q^m$th power map. So $\C^{rm}(\phi^*\omega)$ has simple poles at the same places as $\phi^*\omega$ with the same residues, and therefore $\C^{rm}(\phi^*\omega)-\phi^*\omega$ is holomorphic. The Cartier operator commutes with pullback, so $$\C^{rm}(\phi^*\omega)-\phi^*\omega=\phi^*(\C^{rm}(\omega)-\omega)$$ and we conclude that $\C^{rm}(\omega)-\omega$ is also holomorphic. 
\end{proof}

Using the preceding lemmas, we now prove Theorem \ref{thm: main}.

\begin{proof}[Proof of Theorem \ref{thm: main}]
Let $V=\Omega((y)_\infty+(x)_\infty+\sqrt{(x)_\infty})$ and $W=\Omega((y)_\infty)+(x)_\infty)$. By Lemma \ref{lem: fast behavior}, $\sigma_i(y\, dx)\in W$ for every $i>0$, and $W$ is $\sigma_i$-invariant for every $i$. So we have
$$|N_q({\bf a})|=|\S_q(y\, dx)|\leq |\S_p(y\, dx)|\leq 1 +  |\{\sigma_0^n(y\, dx ):n\geq 1\}| + |W|.$$
By Riemann-Roch, $\dim_{\F_q} W = \deg((y)_\infty+(x)_\infty)+g-1\leq h+d+g-1$. The remainder of the proof will handle the orbit of $y\,dx$ under $\sigma_0$.

Let $T$ be the maximum order of any pole of $y$ or zero of $x$. Let $D=\sqrt{(y)_\infty}+\sqrt{(x)_0}+\sqrt{(x)_\infty}$. By Lemma \ref{prop: eventual behavior}, for $n\geq\lceil\log_p(T)\rceil$ we have $\sigma_0^n(y\, dx)\in \Omega(D-(x))$. Let $\alpha=x^{-1}\sigma_0^{\lceil\log_p(T)\rceil}(y\,dx)$. So $\alpha\in\Omega(D)$, that is, $\alpha$ has simple poles at points that are either poles of $y$, poles of $x$, or zeroes of $x$. We have seen that
$x\C^n(\alpha)=\sigma_0^n(x\alpha)$. It follows that
$$|\{\C^n(\alpha):n\geq 0\}| = |\{\sigma^n(y\, dx): n\geq \log_p(T) \}|.$$
Let $m$ be the LCM of the degrees of the points at which $\alpha$ has a pole. By Lemma \ref{simple poles}, $\C^{rm}(\alpha)-\alpha$ is holomorphic. The space of holomorphic differentials is invariant under $\C$, so the orbit of $\alpha$ under $\C$ is contained in the set $$\{\C^k(\alpha)+\eta:0\leq k<rm\text{ and }\eta\in\Omega(0)\}.$$ This set has size at most $rm|\Omega(0)|=rmq^g$. Thus 
\begin{equation*}
|\{\sigma_0^n(\omega):n\geq 1\}|\leq \lceil\log_p(T)\rceil + rmq^g.
\end{equation*}
We now need to estimate $m$.

Let $L(n)$ be Landau's function, that is, the largest LCM of all partitions of $n$, or equivalently the maximum order of an element in the symmetric group of order $n$. Recall from the proof of Corollary \ref{cor: easy upper bound} that $\deg((y)_\infty)=h$ and $\deg((x)_\infty)=d$. We have
\begin{align*}
\sum_{v_P(y)<0}\deg(P) \leq \sum_{v_P(y)<0}-v_P(y)\deg(P)=\deg((y)_\infty)= h,
\end{align*}
and it follows in the same way that $\sum_{v_P(x)<0}\deg(P)\leq d$ and $\sum_{v_P(x)>0}\deg(P)\leq d$. 
Therefore $m\leq L(h)L(d)^2$. It is clear that $L(a)L(b)\leq L(a+b)$ for all $a$ and $b$, so $m\leq L(h+2d)$. So
\begin{equation*}
|\{\sigma_0^n(y\,dx):n\geq 1\}|\leq \lceil\log_p(T)\rceil+rL(h+2d)q^g.
\end{equation*}
Therefore
$|\S_p(y\, dx)|\leq  1 + \lceil\log_p(T)\rceil+rL(h+2d)q^g + q^{h+d+g-1}$.

It remains to show that the quantity
$$\frac{1 + \lceil\log_p(T)\rceil+rL(h+2d)q^g}{q^{h+d+g-1}}$$
decays to zero as any of $q,h,d,g$ grow to $\infty$. This follows easily from the fact that $g\geq 0$ and $h+d\geq 2$ for any algebraic curve, the simple bound on Landau's function 
\begin{equation*}
L(n)\leq \exp\left((1+o(1))\sqrt{n\log n}\right)
\end{equation*}
from \cite{Landau}, and the fact that $T\leq \max(h,d)$.
\end{proof}

The forward-reading complexity bound of Theorem \ref{thm: main2} follows as an easy corollary.
\begin{proof}[Proof of Theorem \ref{thm: main2}]
Let $V=\Omega((y)_\infty+(x)_\infty+\sqrt{(x)_\infty})$ and let $\lambda\in V^*$ be the linear functional that maps $\omega$ to the constant term of the power series $\frac{\omega}{dx}$. We have $\dim_{\F_q} V\leq h+2d+g-1$, so $$|N_q^f({\bf a})|=|\S_q^T(\lambda)|\leq |V^*|\leq q^{h+2d+g-1}$$
by Proposition \ref{dimension bound}.
\end{proof}

We now show that Theorem \ref{thm: main} is qualitatively sharp for the power series expansions of rational functions, that is, it is sharp if we replace the ``error term" $o(1)$ by zero.

\begin{prop}\label{rational sharp}
For every prime power $q$ and every positive integer $h\geq 1$, there exists $y=\sum_{n=0}^\infty {\bf a}(n)x^n\in\F_q[[x]]$ with $\deg(y)=1$ and $h(y)=h$ (and therefore $g=0$) such that $N_q({\bf a})\geq q^h$.
\end{prop}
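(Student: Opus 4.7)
The plan is to construct $y$ explicitly by perturbing the generating function of a primitive-element trace sequence by a single constant. Fix a primitive element $\alpha\in\F_{q^h}$ and set
\[ y_{\mathrm{tr}}:=\sum_{n\ge 0}\tr_{\F_{q^h}/\F_q}(\alpha^n)\,x^n. \]
A standard partial-fractions computation identifies $y_{\mathrm{tr}}$ with $P/Q$ in lowest terms, where $Q(x)=\prod_{i=0}^{h-1}(1-\alpha^{q^i}x)$ has degree $h$ and $\deg P\le h-1$; coprimality follows by observing that the conjugates $\alpha^{q^i}$ are pairwise distinct, so no root of $Q$ is a root of $P$. Now set $y:=1+y_{\mathrm{tr}}=(Q+P)/Q$. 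Since $\deg Q=h>\deg P$, the numerator $Q+P$ still has degree exactly $h$, and the fraction remains in lowest terms; thus $h(y)=h$ exactly, $\deg(y)=1$, and (trivially, since the underlying curve is $\P^1$) $g=0$.

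The coefficient sequence is $\mathbf{a}(n)=\tr(\alpha^n)+[n=0]$ (Iverson bracket). The key identity is $\tr(x^q)=\tr(x)$ on $\F_{q^h}$, which yields
\[ \mathbf{a}(qn)=\tr((\alpha^n)^q)+[qn=0]=\tr(\alpha^n)+[n=0]=\mathbf{a}(n). \]
Iterating gives $\mathbf{a}(q^h n)=\mathbf{a}(n)$, so the level-$h$ decimation of $\mathbf{a}$ at offset $0$ already recovers $\mathbf{a}$ itself. For $1\le j\le q^h-1$, the relation $\alpha^{q^h}=\alpha$ gives $\mathbf{a}(q^h n+j)=\tr(\alpha^j\alpha^n)=:s_{\alpha^j}(n)$. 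As $j$ runs through $1,\dots,q^h-1$, the powers $\alpha^j$ sweep out $\F_{q^h}^\times$. Hence the $q^h$ level-$h$ decimations of $\mathbf{a}$ consist of $\mathbf{a}$ itself together with $\{s_\beta:\beta\in\F_{q^h}^\times\}$, giving $q^h$ explicit members of the $q$-kernel.

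To finish, I verify that these $q^h$ sequences are pairwise distinct. The map $\beta\mapsto s_\beta$ is $\F_q$-linear with trivial kernel: if $\tr(\beta\alpha^n)=0$ for every $n\ge 0$, then because $\{\alpha^n:n\ge 0\}=\F_{q^h}^\times$ spans $\F_{q^h}$ over $\F_q$, nondegeneracy of the trace pairing forces $\beta=0$. So the $q^h-1$ sequences $s_\beta$ with $\beta\ne 0$ are pairwise distinct. Moreover, if $\mathbf{a}=s_\beta$ then restricting to $n\ge 1$ gives $\tr((1-\beta)\alpha^n)=0$ for all $n\ge 1$, hence $\beta=1$ by the same nondegeneracy; but evaluating at $n=0$ then yields $\tr(1)+1=\tr(1)$, a contradiction. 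Combining with Theorem~\ref{Eilenberg} gives $N_q(\mathbf{a})\ge q^h$.

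The only real subtlety is ensuring that the shift by $+1$ simultaneously preserves the height at exactly $h$ (which needs $\deg(Q+P)=h$ and $\gcd(Q+P,Q)=1$) and leaves $\mathbf{a}$ genuinely distinct from every trace shift $s_\beta$; once these are in place, the kernel count falls out from the single identity $\tr(x^q)=\tr(x)$.
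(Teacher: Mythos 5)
Your proof is correct, and it arrives at essentially the same $y$ as the paper (both are a constant shift of the generating function of a maximal-period linear recurring sequence of degree $h$), but it establishes the kernel lower bound by a genuinely different, and arguably cleaner, route. The paper sets $y = f^{-1}-1$ for a primitive polynomial $f$ and proves $N_q(\mathbf a)\ge q^h$ by working with the Cartier orbit $\S_q(f^{-1}\,dx)$ inside $\Omega((f)_0+P_\infty)$: it invokes Lemma~\ref{simple poles} to control the action of $\C$, uses Riemann--Roch to see $\dim V = h$, and then combines a periodicity argument (all nonzero length-$h$ windows occur in an m-sequence) with the observation $y\,dx\notin V$ to conclude the orbit has exactly $q^h$ elements. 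Your argument bypasses the differential-geometric machinery entirely: writing the m-sequence as $\tr_{\F_{q^h}/\F_q}(\alpha^n)$ makes the level-$h$ decimations visibly equal to the sequences $n\mapsto\tr(\beta\alpha^n)$ for $\beta\in\F_{q^h}^\times$ (together with $\mathbf a$ itself), and their pairwise distinctness drops out of the nondegeneracy of the trace pairing plus the $[n=0]$ perturbation. What the geometric proof buys is that it sits naturally alongside the upper-bound argument of Theorem~\ref{thm: main} and yields the extra structural information that $\S_p$ acts on $V\setminus\{0\}$ as a transitive group action; what your proof buys is elementariness and a completely explicit description of the kernel, with no appeal to Riemann--Roch or the Cartier operator. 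Both are valid, and you have correctly handled the two places where care is needed (coprimality and exact degree of $Q+P$, and separating $\mathbf a$ from every $s_\beta$ via evaluation at $n=0$).
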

\begin{proof}
Let $f=x^h+c_{h-1}x^{h-1}+\dots+c_1x+c_0 \in\F_q[x]$ be any primitive polynomial, that is, such that a root of $f$ generates $\F_{q^h}^\times$. Let $y=f^{-1}-1\in\F_q[[x]]$. The coefficient sequence ${\bf a}$ of $y$ satisfies the linear recurrence relation
$$c_0{\bf a}(n)+c_{1}{\bf a}(n-1)+\dots+c_{h-1}{\bf a}(n-h+1)+{\bf a}(n-h)=0$$ and ${\bf a}$ is eventually periodic with minimal period $q^h-1$ \cite[Thm 6.28]{LidlNiederreiter}. We have $\deg(y)=1$ and $h(y)=h$, so the curve $X$ is $\P^1$, with $K=\F_q(x)$ and $g=0$. 

Note that $(y)_\infty=(f)_0$ is a single point of degree $h$. Let $P_\infty$ be the pole of $x$, which is distinct from $(f)_0$. We compute $$(f^{-1}\,dx)=hP_\infty-(f)_0-2P_\infty=(h-2)P_\infty-(f)_0,$$ so $f^{-1}\,dx$ has at most two poles: a simple pole at $(f)_0$ of degree $h$, and if $h=1$, a simple pole at $P_\infty$ of degree 1. By Lemma \ref{simple poles}, $\C^{rh}(y\,dx)-f^{-1}\,dx=\C^{rh}(f^{-1}\,dx)-f^{-1}\,dx$ is holomorphic (note $\C(dx)=0$ by Proposition \ref{prop: Cartier properties}). As $X$ has genus 0, it carries no nonzero holomorphic differentials, so $\C^{rh}(y\,dx)=f^{-1}\,dx$.

Let ${\bf b}$ be the coefficient sequence of $f^{-1}$. The sequence ${\bf b}$ satisfies a linear recurrence relation of degree $h$ and has period $q^h-1$, so it must be the case that all possible strings of $h$ elements in $\F_q$ except for the string $(0,\dots,0)$ occur in ${\bf b}$ within the first $q^h-1$ terms. For each $0\leq c\leq q-1$, a certain $r$-fold composition of $\Lambda_i$ operators $s\in S_q$ gives $s(f^{-1})=\sum_{n=0}^\infty {\bf b}(qn+c)x^n$, so 
$$\Lambda_0^{rh}s(f^{-1})=\sum_{n=0}^\infty {\bf b}(q^hn+c)x^n=\sum_{n=c}^\infty {\bf b}(n)x^n$$ 
by the periodicity of ${\bf b}$. So there are at least $q^h-1$ distinct power series in $S_q(f^{-1})$.

Let $V=\Omega((f)_0+P_\infty)$. We have $(f^{-1}\,dx)\in V$, and $\dim_{\F_q} V = h$ by Riemann-Roch. A calculation with properties of $\C$ and orders of poles shows that $V$ is $\sigma_i$-invariant for any $i\in\{0,\dots,p-1\}$, so $\S_q(f^{-1}\,dx)\subseteq V$. The counting argument from the previous paragraph shows that the orbit $\S_q(f^{-1}\,dx)$ comprises all nonzero elements of $V$. (In fact, it is not hard to show that $\sigma_i |_V$ is invertible for each $i$, so the action of $\S_p$ on $V$ is a group action with precisely two orbits: $\{0\}$ and $V\setminus\{0\}$.) Note that $y\, dx\notin V$, for if $y\,dx$ were in $V$, then $dx$ would be also, but $(dx)=-2P_\infty$. So $y\notin S_q(f^{-1})$, which establishes the lower bound $|S_q(y)|=N_q({\bf a})\geq q^h-1+1=q^h$.

%For any $\omega\in V$ and $i\in\{0,\dots,p-1\}$, the differential $x^{p-1-i}\omega$ has a simple pole at $(f)_0$. Therefore the restriction of $\sigma_i$ to $V$ has trivial kernel, as the kernel of $\C:\Omega\to\Omega$ is the space of exact differentials, and a differential with a simple pole cannot be exact. So each $\sigma_i$ is invertible on $V$, and the action of the monoid $\S_q$ on $V$ is actually a group action with two orbits: $\{0\}$ and $V\setminus\{0\}$.

%Let $W=\Omega((f)_0+2P_\infty)$. We have $y\,dx=(f^{-1}-1)\,dx\in W$. Let $\lambda\in W^*$ be such that $\lambda(\omega)$ is the constant term of $\frac{\omega}{dx}\in\F_q[[x]]$, and consider the antirepresentation of ${\bf a}$ given by $(W^*,\lambda,\phi^T,y\,dx)$ as in Proposition \ref{dimension bound}, where for $c\in\Sigma_q$, $\phi(c)$ is defined as in the proof of Proposition \ref{new Christol proof} and $\phi^T$ is defined on $\Sigma_q$ by $\phi^T(c)=\phi(c)^T$. Note $\lambda(y\,dx)=0$ because the constant term of $f^{-1}$ is $1$, so in fact $\lambda\in V^*$. It is easy to see that the action of the monoid $\S_q^T$ on $V^*$ is a group action with two orbits, $V^*\setminus\{0\}$ and $\{0\}$. So for any nonzero $\lambda\in V^*$, $\S_q^T(\lambda)=V^*\setminus\{0\}$. By Proposition \ref{forward minimal}, $N_q^f({\bf a})=|\S_q^T(\lambda)|=q^h-1$.

\end{proof}

\subsection{Variation mod primes}\label{Variation mod p}

Let $K$ be a number field and let $y=\sum_{n=0}^\infty {\bf a}(n) x^n \in K[[x]]$. If the prime $\p$ of $K$ is such that $v_\p({\bf a}(n))\geq 0$ for all $n$, let ${\bf a}_\p$ denote the reduction of ${\bf a}$ mod $\p$, and let $y_\p=\sum_{n=0}^\infty {\bf a}_\p(n) x^n$ be the reduced power series with coefficients in the residue field $k(\p)$.

Suppose $y$ is algebraic over $K(x)$. By an old theorem of Eisenstein, there are only finitely many primes $\p$ such that $v_\p({\bf a}(n))<0$ for some $n$ (\cite[pp.\ 765-767]{Eisenstein}, see also \cite{Schmidt}). So the sequence ${\bf a}_\p$ is defined for all but finitely many $\p$, and by Christol's theorem it is $|k(\p)|$-automatic (it is an easy observation that the reduction mod $\p$ of an algebraic function is algebraic). An extension of our main question is how the algebraic nature of $y$ affects the complexity $N_{|k(\p)|}({\bf a}_\p)$ as the prime $\p$ varies. Theorem \ref{no growth} answers this question in the case that the complexities are bounded at all primes; in this case $y$ must have a very special form. Note that we do not need to assume that $y$ is algebraic in the statement of the theorem. To simplify notation, we will write $N_\p({\bf a_\p})$ in place of $N_{|k(\p)|}({\bf a}_\p)$. 

\begin{thm}\label{no growth}
Let $y=\sum_{n=0}^\infty {\bf a}(n)x^n\in K[[x]]$. Then $N_\p({\bf a}_\p)$ and $N_\p^f({\bf a}_\p)$ are bounded independently of $\p$ if and only if $y$ is a rational function with at worst simple poles that occur at roots of unity (except possibly for a pole at $\infty$, which may be of any order).
\end{thm}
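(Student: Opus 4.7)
The strategy is to prove the two directions separately, with the nontrivial direction reducing to Cobham's theorem via a value-counting argument. For the easy direction (``$\Leftarrow$''), suppose $y = P(x) + \sum_{i=1}^r c_i/(1-\zeta_i x)$ with $P \in K[x]$, $c_i \in \bar K$, and $\zeta_i$ roots of unity of orders dividing $M := \mathrm{lcm}_i(\mathrm{ord}(\zeta_i))$. For all but finitely many primes $\p$, the reductions $\bar c_i, \bar\zeta_i$ are defined in a finite extension of $k(\p)$ and satisfy $\bar\zeta_i^M = 1$, so ${\bf a}_\p(n) = \bar P_n + \sum_i \bar c_i \bar\zeta_i^n$ is eventually periodic with period dividing $M$ and preperiod at most $\deg P + 1$. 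A standard construction (states tracking the partial input modulo $M$, with finitely many transient states for the preperiod) produces both a forward-reading and a reverse-reading $q$-DFAO with $O(\deg P + M)$ states for every base $q$, so $N_\p({\bf a}_\p)$ and $N_\p^f({\bf a}_\p)$ are bounded uniformly in $\p$.

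For the hard direction (``$\Rightarrow$''), assume $N_\p({\bf a}_\p) \leq C$ for all $\p$ outside a finite set. Step one is value counting: since a DFAO with $\leq C$ states generates a sequence taking at most $C$ distinct values (via its output function), ${\bf a}_\p$ takes at most $C$ values in $k(\p)$. If ${\bf a}$ took at least $C+1$ distinct values $v_0,\dots,v_C$ in $K$, then each nonzero difference $v_i - v_j$ would lie in only finitely many primes, so outside a finite set of bad primes the reductions $\bar v_0, \dots, \bar v_C$ would remain distinct in $k(\p)$, contradicting the bound. Hence ${\bf a}$ takes values in a finite set $V \subset K$ with $|V| \leq C$.

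Step two applies Cobham's theorem. Pick two primes $\p_0, \p_1$ of $K$ lying over distinct rational primes and outside the bad set, so the reductions $V \hookrightarrow k(\p_i)$ are injective. Then ${\bf a}_{\p_i}$ is obtained from ${\bf a}$ by a bijective relabeling of the alphabet; automaticity and the state count are preserved under such relabeling, so ${\bf a}$ itself (viewed as a sequence over $V$) is both $q_{\p_0}$-automatic and $q_{\p_1}$-automatic. Since $q_{\p_i}$ are powers of distinct rational primes, they are multiplicatively independent, and Cobham's theorem forces ${\bf a}$ to be eventually periodic. Writing ${\bf a}(n+T) = {\bf a}(n)$ for $n \geq N$, I get $y = Q(x) + x^N g(x)/(1-x^T)$ for some polynomials $Q, g$ with $\deg g < T$, and partial-fraction decomposition over the $T$-th roots of unity rewrites $x^N g(x)/(1-x^T) = \sum_\zeta c_\zeta/(1-\zeta x)$ with all poles simple at roots of unity, which is precisely the required form.

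The main obstacle is the Cobham step, since Cobham's theorem is classically phrased for sequences over a fixed finite alphabet while ${\bf a}_{\p_0}$ and ${\bf a}_{\p_1}$ a priori live in different alphabets $k(\p_0)$ and $k(\p_1)$. The resolution is that automaticity (and the exact state count) is invariant under bijective relabeling of the output alphabet, so after pulling both reductions back along their inverse embeddings into $V$ we obtain a single $V$-valued sequence automatic in two multiplicatively independent bases, to which Cobham applies verbatim.
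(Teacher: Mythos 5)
Your proof is correct and takes essentially the same route as the paper: the forward direction proceeds by value counting to show $\mathbf{a}$ takes finitely many values in $K$, choosing two primes of distinct residue characteristic at which the finite value set injects into the residue fields, invoking Cobham's theorem to get eventual periodicity, and concluding rationality with simple poles at roots of unity; the reverse direction bounds the kernel of an eventually periodic sequence directly. The only cosmetic differences are that the paper phrases the $\Leftarrow$ direction via a decimation count of the $q$-kernel rather than your explicit mod-$M$ state construction (and writes $y = f/(1-x^m)$ rather than a partial-fraction decomposition), and your treatment of the alphabet-relabeling issue before invoking Cobham is spelled out a bit more explicitly than in the paper.
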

\begin{proof}
It suffices to prove the theorem for reverse-reading complexity, as $N_\p^f({\bf a})\leq p^{N_\p({\bf a})}$ for any sequence ${\bf a}$ by Proposition \ref{dimension bound}. Assume that $N_\p({\bf a})$ is uniformly bounded for all $\p$ (such that it is defined). Then the coefficient sequence ${\bf a}$ assumes a bounded number of values under reduction mod $\p$ regardless of $\p$, and so ${\bf a}$ assumes finitely many values in $K$. Let $A$ be this finite subset of $K$.

Choose two primes $\p$ and $\q$ such that $|k(\p)|$, $|k(\q)|$ are multiplicatively independent integers (i.e. char $k(\p)\neq$ char $k(\q)$) and all elements of $A$ are distinct both mod $\p$ and mod $\q$ (this is possible because only finitely many primes divide distances between distinct elements of $A$). The reduced power series $y_\p\in k(\p)[[x]]$ and ${y}_\q\in k(\q)[[x]]$ are both algebraic by Christol's theorem. So there exist injections $i_\p:A\hookrightarrow k(\p)$ and  $i_\q:A\hookrightarrow k(\q)$ such that the sequence ${\bf b}(n)= i_\p({\bf a}(n))$ is $|k(\p)|$-automatic and the sequence ${\bf c}(n)=i_\q({\bf a}(n))$ is $|k(\q)|$-automatic. Therefore ${\bf a}$ is both $|k(\p)|$-automatic and $|k(\q)|$-automatic. By Cobham's theorem \cite[Thm 11.2.2]{AS}, ${\bf a}$ is an eventually periodic sequence of some period $m$, so $y$ is a rational function of the form $y=\frac{f}{1-x^m}$ for some polynomial $f$. So the (finite) poles of $y$ are simple and occur at roots of unity.

Conversely, assume that the (finite) poles of $y$ are simple and occur at roots of unity. Therefore $y=\frac{f}{1-x^m}$ for some $m$ and $f\in K[x]$, and the coefficient sequence ${\bf a}$ is eventually periodic of (possibly non-minimal) period $m$, that is, there is some $c$ such that ${\bf a}(n+m)={\bf a}(n)$ for all $n>c$. In particular, ${\bf a}$ assumes finitely many values. An easy decimation argument now shows that $N_\p({\bf a})$ is uniformly bounded for all primes. Suppose $|k(\p)|=p^r$ and assume that $p^r>c$ (which excludes only finitely many $\p$). For any $i\geq 1$ and $j\in\{0,\dots,p^{ri}-1\}$, the subsequences ${\bf a}(p^{ri}n+j)$ are periodic of period $m$ beginning with the second term, and they assume the same finite set of values as ${\bf a}$. There are clearly only finitely many sequences that fit this description. So the size of the $p^r$-kernel of ${\bf a}$, and therefore $N_\p({\bf a})$, is bounded independently of $p$.

\end{proof}

\section{Examples}\label{Examples}

We give three detailed examples of computing the state complexity of an automatic sequence. Examples \ref{binomial series} and \ref{elliptic example} in particular show the usefulness of the algebro-geometric approach.

\begin{exa}
$y=\dfrac{1}{1-2x}$.
\end{exa}

\begin{figure}[ht]
\caption{$7$-DFAO generating powers of 2 mod $7$}
\includegraphics[scale=.75]{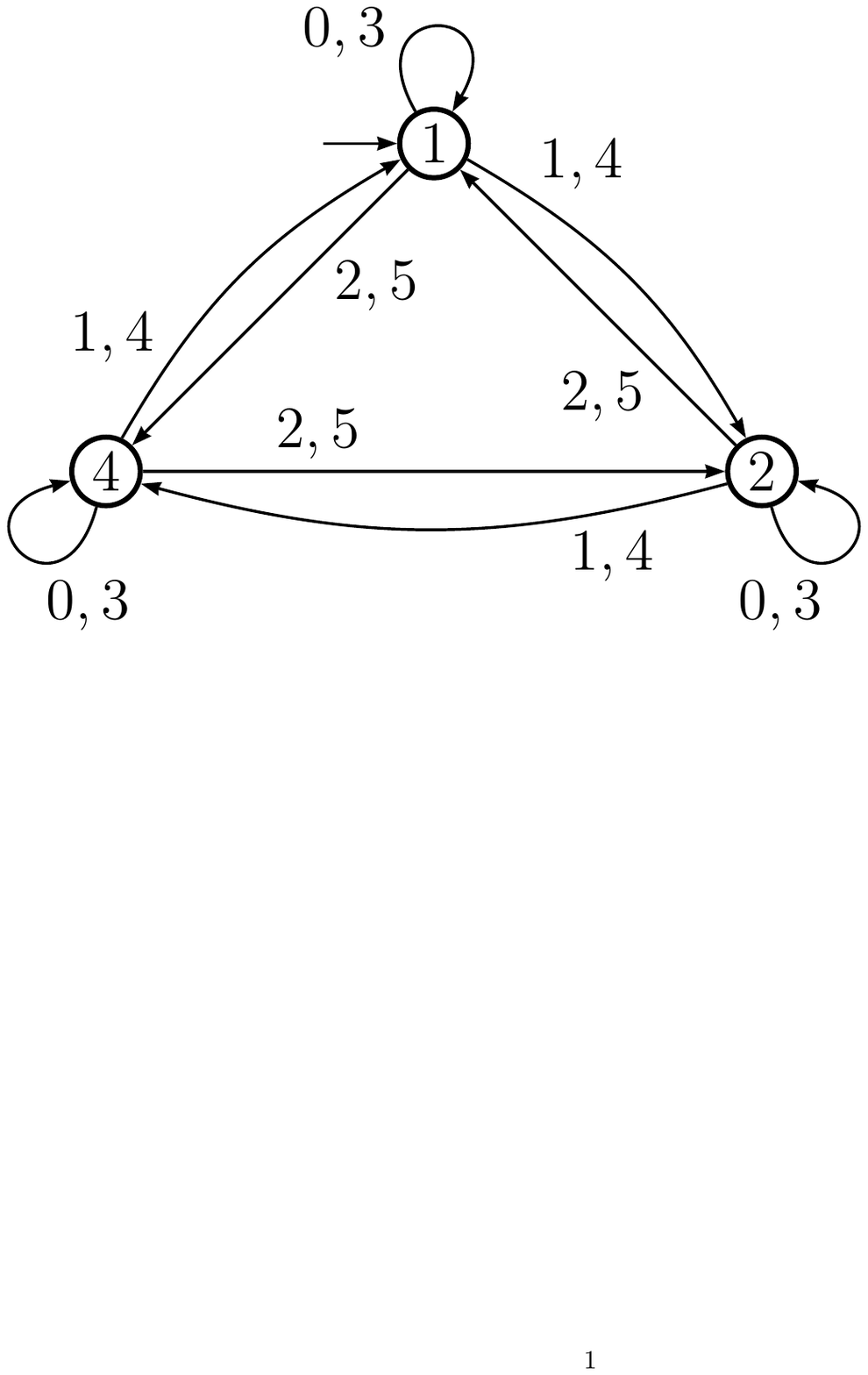}
\label{Powers of 2}
\end{figure}

Let $p$ be odd. Let ${\bf a}(n)=2^n$ mod $p$ and $y=\sum_{n=0}^\infty{\bf a}(n)x^n\in\F_p[[x]]$. We have $y(1-2x)=1$, so $y$ has degree 1, height 1, and genus 0. By Theorem \ref{thm: main}, $N_p({\bf a})\leq (1+o_p(1))p$. We compute $\Lambda_i(y)=2^i y$, so $S_p(y)=\{2^i y:i\geq 0\}$, and $N_p({\bf a})=\ord_p(2)$. So in fact $$\lceil\log_2(p)\rceil\leq N_p({\bf a})\leq p-1.$$ If there are infinitely many Mersenne primes, the lower bound is sharp infinitely often, and if Artin's conjecture is true, the upper bound is sharp infinitely often.

The sequence ${\bf a}$ has a one-dimensional $p$-representation where $\phi(i):v\mapsto 2^iv$. Each $\phi(i)$ can be written as a (symmetric) $1\times 1$ matrix, so the $p$-antirepresentation on $V^*$ is the same as the original representation, and $N_p^f({\bf a})=N_p({\bf a})$. From the automata point of view, this is the obvious fact that the same DFAO outputs ${\bf a}$ in both the forward-reading and reverse-reading conventions. The transition diagram of the DFAO for $p=7$ is given in Figure \ref{Powers of 2}.

\begin{exa}\label{binomial series}
$y=\dfrac{1}{\sqrt{1-4x}}$.
\end{exa}

\begin{figure}[ht]
\caption{5-DFAO generating central binomial coefficients mod 5}
\includegraphics[scale=.75]{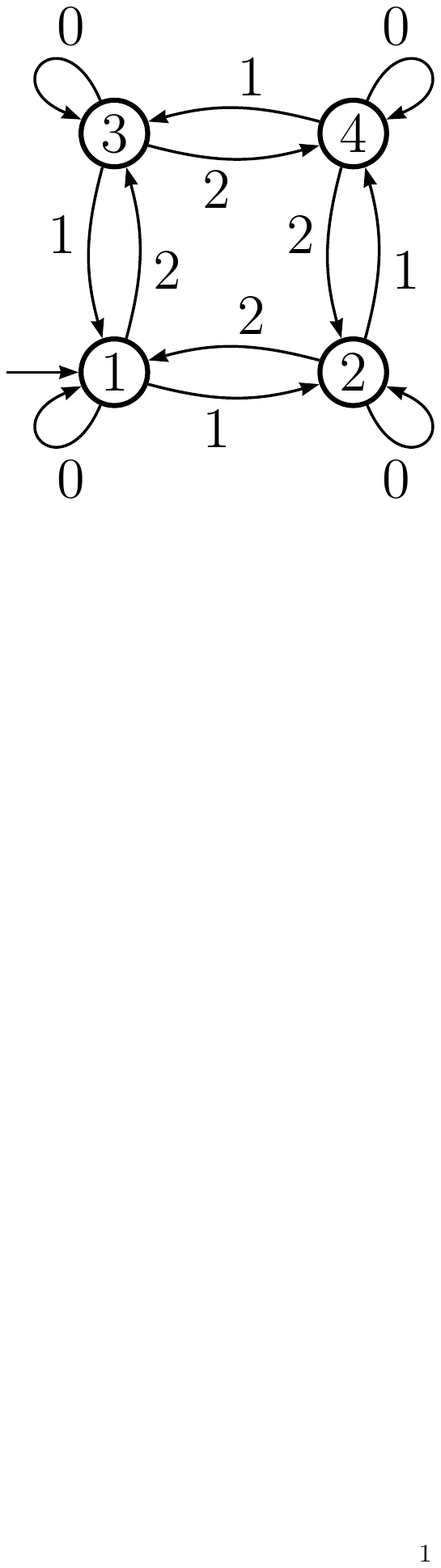}
\label{BinomialSeries}
\end{figure}

Let $p$ be an odd prime. Let ${\bf a}(n)$ be the central binomial coefficient ${2n\choose n}$ reduced mod $p$ and let $y=\sum_{n=0}^\infty{\bf a}(n)x^n\in\F_p[[x]]$. From Newton's formula for the binomial series, we have $y^2(1-4x)=1$. So $y$ has degree 2, height 1, and genus 0, and $N_p({\bf a})\leq (1+o_p(1))p^2$. We show that $N_p({\bf a})=N_p^f({\bf a})=p$. (A calculation verifies that $N_2^f({\bf a})=N_2({\bf a})=2$ also.)

Let $X$ be the curve defined by $y^2(1-4x)=1$. We have
$x=\frac{1}{4}-y^{-2}$, so $X=\P^1(\F_p)$, parametrized by $y$. Let $P_0$ and $P_\infty$ be the zero and pole of $y$, and let $\omega=y\,dx$. A computation gives $dx =-2y^{-3}\,dy$, so
\begin{align*}
(\omega) &= -2(P_0)\text{ and}\\
(x) &= (P_2)+(P_{-2}) - 2(P_0).
\end{align*}
For any $i\in\{0,\dots, p-1\}$,
\begin{equation*}
v_{P_0}(x^{p-1-i}\omega)=(p-1-i)v_{P_0}(x)+v_{P_0}(\omega)>\left(p-1\right)(-2)-2 = -2p.
\end{equation*}
So $v_{P_0}(\sigma_i(\omega))=v_{P_0}(\mathcal{C}(x^{p-1-i}\omega))$ is either $0,-1$, or $-2$, and $P_0$ is the only point at which $\sigma_i(\omega)$ can have a pole. A canonical divisor of $X$ has degree $-2$, so by Riemann-Roch, $\Omega(2P_0)$ is one-dimensional, $\S_p(\omega)\subseteq \F_p\omega$, and $N_p({\bf a})\leq p$.

More explicitly, as  $\S_p(\omega)$ sits in a one-dimensional vector space we have $\sigma_i(\omega)=c_i\omega$ for some $c_i\in\F_p$. As $\sigma_i(\omega)=\Lambda_i(y)\,dx$ and the constant term of $y$ is 1, $c_i$ is equal to the constant term of $\Lambda_i(y)$, which is $\binom{2i}{i}$. So $\sigma_i(\omega)=\binom{2i}{i}\omega$. Equating coefficients gives
\begin{equation*}
\binom{2(pn+i)}{pn+i}\equiv \binom{2i}{i}\binom{2n}{n}\pmod{p}.
\end{equation*}
(Amusingly, this gives a roundabout argument that recovers a special case of the classical theorem of Lucas on binomial coefficients mod $p$.)

To see that $N_p({\bf a})$ is exactly $p$, note that ${\bf a}(1)=2$, and that for any odd prime $q$, ${\bf a}\left(\frac{q+1}{2}\right)$ is the first central binomial coefficient divisible by $q$. This shows that the subgroup of $\F_p^\times$ generated by all nonzero central binomial coefficients mod $p$ contains all primes less than $p$ and therefore is all of $\F_p^\times$, and furthermore ${\bf a}\left(\frac{p+1}{2}\right)=0$. 

As in the previous example, the fact that $\S_p(\omega)$ lies in a one-dimensional vector space verifies that $N_p^f({\bf a})=N_p({\bf a})$ for all $p$. The transition diagrams for the automata that output ${\bf a}$ have nicely symmetric structures. Figure \ref{BinomialSeries} displays the DFAO for $p=5$ -- all undrawn transitions, which are on the inputs 3,4, and 5, go to an undrawn trap state, which outputs zero.

\begin{exa}\label{elliptic example}
$y=\dfrac{1}{\sqrt{1-4x^3}}$.
\end{exa}

\begin{figure}[ht]
\caption{Reverse-reading 5-DFAO generating coefficients of $(1-4x^3)^{-1/2}$ mod 5}
\includegraphics[scale=.75]{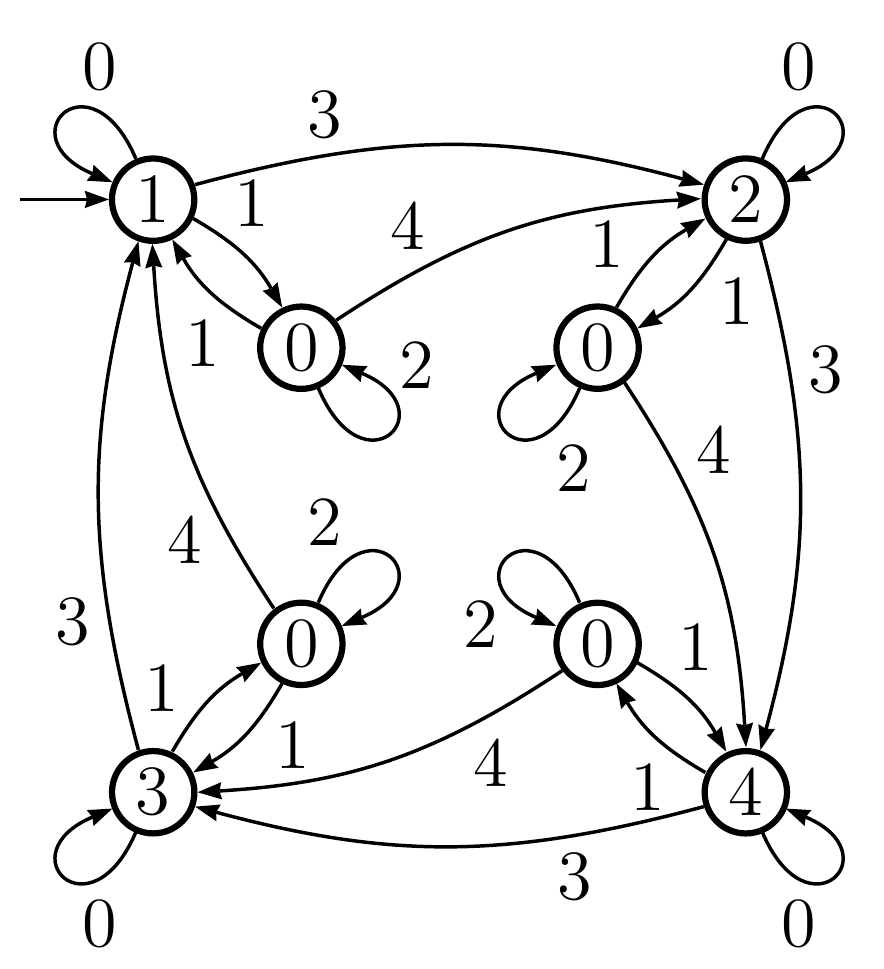}
\label{EllipticBinomial}
\end{figure}

Let $p\notin\{2,3\}$. Let ${\bf a}$ be the coefficient sequence of the series $\frac{1}{\sqrt{1-4x^3}}\in\Q[[x]]$ reduced mod $p$, so that ${\bf a}(3n) = \binom{2n}{n}$ mod $p$ and ${\bf a}(n)=0$ if $n$ is not a multiple of 3. Let $y=\sum_{n=0}^\infty {\bf a}(n)x^n$. We have $y^2(1-4x^3)=1$, so $y$ is of degree 2, height 3, and genus 1, and $N_p({\bf a})\leq (1+o_p(1))p^5$. We show that $N_p({\bf a})=2p-1$.

Let $C$ be the curve defined by $Y^2(Z^3-4W^3)=Z^5$ in $\P^2$. This curve is singular, so define the smooth (elliptic) curve $X$ by $Y^2Z=Z^3-4W^3$. The morphism $\phi:X\to C$ defined in homogeneous coordinates by
$$\phi:[W:Y:Z]\mapsto [WY:Z^2:YZ]$$
gives the normalization of $C$. The forms $[Y:Z]$ and $[W:Z]$ are maps from $C$ to $\P^1$, so we can consider them as elements of $\F_p(C)$. Let $y=\phi^*[Y:Z]$ and $x=\phi^*[W:Z]$. So we have $y^2(1-4x^3)=1$. Let $\omega=y\,dx$ and let $P_\infty$ be the point on $X$ written as $[0:1:0]$ in homogeneous coordinates. The following are easy computations, where $P,Q,$ and $R$ are some points of $X$ that we do not need to compute explicitly:
\begin{align*}
(y) & = 3P_\infty - P - Q - R\\
(x) & = [0:1:1] + [0:-1:1] - 2P_\infty\\
(dx) & = -(y).
\end{align*}
In particular, $(\omega)=0$. Our usual computation for the possible orders of poles of $\sigma_i(\omega)$ shows that $\S_p(\omega)\subseteq\Omega(2P_\infty)$, which has dimension 2 with $\{\omega,x\omega\}$ as a basis. We use properties of $\C$ to compute the action of $\S_p$ on the basis.

As $y^2=\frac{1}{1-4x^3}$, we have 
\begin{align*}
\sigma_i(\omega) & =\C\left(x^{p-i-1}y\,dx\right)=\C\left(x^{p-i-1}\frac{y^p}{y^{p-1}}\,dx\right)=y\C(x^{p-i-1}(1-4x^3)^{\frac{p-1}{2}}\,dx)\\
& =y\C\left(\sum_{k=0}^{\frac{p-1}{2}}{\frac{p-1}{2}\choose k}(-4)^kx^{3k+p-i-1}\,dx\right).
\end{align*}
So $\sigma_i(\omega)$ is nonzero precisely when there is some $0\leq k\leq \frac{p-1}{2}$ with $$3k+p-i-1\equiv p-1\pmod{p},$$ that is, when $3k\equiv i\pmod{p}$ has a solution $k$ with $0\leq k\leq \frac{p-1}{2}$. If there is such a $k$, then it is unique, and $3k-i\leq \frac{3(p-1)}{2}<2p,$ so either $3k=i$, in which case $$\sigma_i(\omega)={\frac{p-1}{2}\choose \frac{i}{3}}(-4)^{\frac{i}{3}}\omega,$$
or $3k=i+p$, in which case $$\sigma_i(\omega)={\frac{p-1}{2}\choose \frac{i+p}{3}}(-4)^{\frac{i+p}{3}}x\omega.$$ This shows that $\S_p(\omega)\subseteq\F_p\omega\cup\F_p x\omega$. Also, the fact that $\sigma_i(\omega)=\Lambda_i(y)\,dx$ proves the identity
$${\frac{p-1}{2}\choose k}(-4)^k\equiv{2k\choose k}\pmod{p}$$
for all $k$.

With this calculation we can explicitly write the restriction of $\sigma_i$ to $\Omega(2P_\infty)$ by computing its action on the basis $\{\omega,x\omega\}$. So far we have only computed the action of $\sigma_i$ on $\omega$, but for $i\geq 1$ we have $\sigma_i(x\omega)=\sigma_{i-1}(\omega)$, and $\sigma_{0}(x\omega)=\C(x^p\omega)=x\C(\omega)=x\sigma_{p-1}(\omega)$. We have
$$\sigma_i(\omega)=
\left\{\begin{array}{ll}
{\frac{2i}{3}\choose\frac{i}{3}}\omega & : i\equiv 0\pmod{3} \\
{\frac{2(i+p)}{3}\choose\frac{i+p}{3}} x\omega & : i\equiv -p\pmod{3} \\
0 & : i\equiv p\pmod{3}
\end{array}\right.
$$
and
$$\sigma_i(x\omega)=
\left\{\begin{array}{ll}
{\frac{2(i-1)}{3}\choose\frac{i-1}{3}}\omega & : i\equiv 1\pmod{3} \\
{\frac{2(i-1+p)}{3}\choose\frac{i-1+p}{3}} x\omega & : i\equiv 1-p\pmod{3} \\
0 & : i\equiv 1+p\pmod{3}
\end{array}\right.
$$
where $0,p,-p$ are distinct mod $p$ because $p>3$. 

(Incidentally, it follows from our computation that $\C(\omega)=0$ if and only if $p\equiv 2\pmod{3}$, which shows that these are precisely the primes for which the elliptic curve $X$ is supersingular, as the classical Hasse invariant is the rank of the restriction of the Cartier operator to the space of holomorphic differentials. See \cite[Section 5.4]{SilvermanEllipticCurves}.)

Examining the binomial coefficients that appear in the formula for $\sigma_i(\omega)$ shows that ${2k\choose k}$ appears as a coefficient on $\omega$ for $0\leq k\leq\lfloor p/3\rfloor$, and as a coefficient on $x\omega$ for $\lfloor p/3\rfloor +1\leq k\leq (2p-1)/3$. As in Example \ref{binomial series}, the values of ${2k\choose k}$ mod $p$ for $0\leq k \leq (p-1)/2$ generate the multiplicative group of $\F_p^\times$, and we have $\sigma_1(x\omega)=\omega$. This is already enough to show that $\S_p(\omega)=\F_p\omega\cup\F_px\omega$, so $N_p({\bf a})=2p-1$.

For $p=5$, the reverse-reading 5-DFAO that outputs ${\bf a}$ is pictured in Figure \ref{EllipticBinomial}. As usual, any undrawn transitions lead to a trap state that outputs $0$.

\bibliographystyle{amsplain}
\bibliography{AutomaticSequencesAndCurves}
\nocite{*}

\end{document}